

\documentclass[preprint,3p , onecolumn]{elsarticle}



\usepackage{amssymb}
 \usepackage{amsthm}
\usepackage{amsfonts,amsmath,latexsym,amssymb, amscd, graphicx, color}

\newtheorem{theorem}{Theorem}
\newtheorem{lemma}{Lemma}
\newtheorem{definition}[theorem]{Definition}
\newtheorem{proposition}{Proposition}
\newtheorem{corollary}{Corollary}
\newdefinition{remark}{Remark}
\newdefinition{example}{Example}
\usepackage{hyperref}

\begin{document}

\begin{frontmatter}



\title{Operator convex functions and their applications}



\author[a]{V. Kaleibary\corref{cor1}}\ead{v.kaleibary@gmail.com}
\author[a]{M. R. Jabbarzadeh}\ead{mjabbar@tabrizu.ac.ir}
\author[b]{Shigeru Furuichi }\ead{furuichi@chs.nihon-u.ac.jp}
\cortext[cor1]{Corresponding author}
\address[a]{Faculty of Mathematical Sciences, University of Tabriz
5166615648, Tabriz, Iran}
\address[b]{Department of Information Science, College of Humanities and Sciences, Nihon University,\\
3-25-40, Sakurajyousui, Setagaya-ku, Tokyo, 156-8550, Japan}

\begin{abstract}
In this paper, we introduce operator geodesically convex  and operator convex-log functions and  characterize some properties of them. Then apply these classes of functions to present several operator Azc\'{e}l and Minkowski type inequalities extending some known results. The concavity counterparts are also considered. 
\end{abstract}

\begin{keyword}
Operator convex function\sep  convex-log function\sep geodesically convex function\sep eigenvalue inequality\sep Acz\'{e}l inequality\sep operator mean.


 \MSC[2010] 47A63\sep 39B62\sep 15A42\sep 15A60.   

 \end{keyword}
 
\end{frontmatter}



\section{Introduction}
\label{}

It is known that the theory of matrix/operator convex functions introduced by Kraus \cite{K1936} have many important applications in matrix analysis and quantum information and so on. Following this study, significant concepts of convexity have been extended elegantly to Hilbert space operators from scalar cases. The main aim of this paper is to establish an analogue of some convexity properties for operator functions. For this purpose,  we first  briefly review a survey on convex functions and operator convex functions.

\begin{definition} {\bf (\cite{N2000})}
Let an interval $J \subset (0,\infty)$, $a,b \in J$, $v\in[0,1]$ and let the function be $f:J\to (0,\infty)$.
\begin{itemize}
\item[(AA)] The function $f$ is said to be a (usual) convex iff 
$$
f((1-v)a+vb) \leq (1-v)f(a) +vf(b).
$$
\item[(AG)] The function $f$ is said to be a log-convex iff 
$$
f((1-v)a+vb) \leq f^{1-v}(a)f^v(b).
$$
\item[(GA)] The function $f$ is said to be a geodesically convex 
iff 
$$
f(a^{1-v}b^v) \leq (1-v)f(a) + vf(b).
$$
\item[(GG)] The function $f$ is said to be a geometrically convex iff 
$$
f(a^{1-v}b^v) \leq  f^{1-v}(a)f^v(b).
$$
\end{itemize}
If inequalities are reversed, then we have the corresponding types of concave functions.
\end{definition}
\begin{definition} {\bf (\cite{H2019})}  \label{definition1.1}
A function $f: (0, \infty) \rightarrow \mathbb{R}$ is called convex-log if it can be written on the form $f(t)= h (\log t), \;t>0$ where $h: \mathbb{R}\rightarrow \mathbb{R}$ is a convex function.
\end{definition}

We give a remark on the basic properties of the above function.

\begin{remark}
\begin{itemize}
\item[(i)]A convex-log function satisfies the inequality 
\begin{align*} 
f(a^{1-v} b^{v})  \leq f(a) \nabla_ v f(b), \;\;\;\;\; a, b>0,
\end{align*}
for $v\in [0, 1]$. Indeed,
\begin{align*} 
f(a^{1-v} b^{v})  &= h(\log(a^{1-v} b^{v}))  = h((1-v) \log(a)+v \log(b)) \\
 & \leq (1-v) h(\log(a)) + v h(\log(b)) = (1-v) f(a)+ v f(b). 
\end{align*}
So, we can say every convex-log function is a geodesically convex function \cite{H2019}. 
\item[(ii)]For a continuous positive function $f$, if $\log f$ is convex, then it is natural to say $f$ to be a log-convex. If $f$ be an increasing log-convex function, then it is a geometrically convex and so a geodesically convex by  the arithmetic-geometric mean inequality. While every (increasing) convex-log function is a geodesically convex and not necessary a geometrically convex function. There are examples that show the difference between these two classes of functions. For instance, the function $f(t)=t^p, \; p\in \mathbb{R}$ is a convex-log, by letting $h(t)= \exp(pt)$. But it is not a log-convex, since $\log(f(t)) = p \log(t)$ is not convex.
\end{itemize}
\end{remark}

 For a real-valued function $f$ and a self adjoint operator $A\in B(\mathcal{H})$, the value $f(A)$ is understood by means of the functional calculus. 
For each $\alpha \in [0, 1]$ and strictly positive operators $A, B$, $A\nabla_\alpha B= (1-\alpha) A+ \alpha B$, $A!_\alpha B = ((1-\alpha) A^{-1}+ \alpha B^{-1})^{-1}$ and 
$A \sharp_\alpha B = A^{1/2} ( A^{-1/2} B A^{-1/2})^{\alpha} A^{1/2}$ are  the $\alpha$-arithmetic, $\alpha$-harmonic and  $\alpha$-geometric means, respectively. It is known that for any $A, B>0$, we have
$A !_{\alpha}B \leq A \sharp_\alpha B \leq A \nabla_\alpha B$. Some of the above definitions of
convexity have been extended to the operator case as follows.
\begin{definition}  \label{definition1.4}
Let $J$ be an interval of $(0, \infty)$. Let $f$ be a continuous real function on $J$, $A, B $  be strictly positive operators with spectra contained in $J$ and  $v\in[0,1]$.
\begin{itemize}
\item[(i)] The function $f$ is said to be an operator convex iff 
$$
f((1-v)A+vB) \leq (1-v)f(A) +vf(B).
$$
\item[(ii)] The nonnegative function $f$ is said to be an operator log-convex iff 
$$
f((1-v)A+vB) \leq f(A) \sharp_v f(B).
$$
\end{itemize}
\end{definition}

The concept of operator convexity was delicately introduced by Kraus \cite{K1936}. Hiai and Ando in \cite{AH2011} obtained a full characterization of
operator log-convex functions.
Also, a variant of geometrically convexity property is presented in \cite{GK2016} as follows:
$$
f(A\sharp_v B) \leq M ( f(A) \sharp_v f(B)), \;\; M>0.
$$ 
In this note, we
extend the definition of geodesically convex  and convex-log functions to the operator space.  In the second section, we first introduce operator geodesically convex (concave) functions. We present some properties of them and show that the class of such functions is fairly rich. Then we obtain an operator Azc\'{e}l inequality, including operator geodesically convex functions. In the third section, we give the definition of an operator log-convex function and investigate some properties of that. Further, a variant of operator Azc\'{e}l inequality involving operator concave-log functions is given.
The last section is devoted to studying another type of geodesically convex functions which leads to getting some Minkowski type inequalities. The obtained results generalize the corresponding Minkowski and Azc\'{e}l inequalities in \cite{BH2014} and \cite{Mos2011}, respectively.

\section{Operator geodesically convex function}

In 1956, Acz\'{e}l \cite{A1956} proved that if $a_i, b_i (1 \leq i \leq n)$ are positive real numbers such that
$a_1^2 - \sum\limits_{i=2}^n a_i^2 >0 $ and $b_1^2 - \sum\limits_{i=2}^n b_i^2 >0 $, then
\begin{align*}
\left(a_1 b_1 - \sum_{i=2}^n a_i b_i \right)^2 \geq  \left(a_1^2 - \sum_{i=2}^n a_i^2 \right) \left(b_1^2 - \sum_{i=2}^n b_i^2 \right).
\end{align*}
Popoviciu \cite{P1995} presented an exponential extension of Acz\'{e}l's
inequality, so that
 if $p>1, q>1, \frac{1}{p}+\frac{1}{q}=1$, 
$a_1^p - \sum\limits_{i=2}^n a_i^p >0 $, and $b_1^q - \sum\limits_{i=2}^n b_i^q >0 $, then
\begin{align*}
 a_1 b_1 - \sum_{i=2}^n a_i b_i \geq  \left(a_1^p - \sum_{i=2}^n a_i^p \right)^\frac{1}{p} \left(b_1^q - \sum_{i=2}^n b_i^q \right)^\frac{1}{q}.
\end{align*}

 Acz\'{e}l's  and Popoviciu's inequalities were
sharpened and some generalizations and variants of these inequalities  are presented. See \cite{D1994} and  references therein. 
An operator version of the classical Acz\'{e}l inequality  was given in \cite{Mos2011}.  Further, some reverses of  the operator Acz\'{e}l inequality were given in \cite{KF2018} and a variant of them  was provided in \cite{FJK2020}.\\
In this section we  introduce an operator geodesically convex (concave) function and  present an operator  Acz\'{e}l inequality involving this class of functions. 
 
\begin{definition}\label{definition2.1}
Let $J$ be an interval of $(0, \infty)$. A nonnegative continuous function $f$ on $J$ is said to be an operator geodesically convex iff
\begin{align}\label{def_ineq01}
f\left(A\sharp_v B\right) \leq  f(A) \nabla_v f(B),
\end{align}
for strictly positive operators $A,B $ with spectra contained in $J$. The function  $f$  is also said to be an operator geodesically concave iff $-f$ is operator geodesically convex.
\end{definition}
We first aim to show  that the class of functions satisfying
\eqref{def_ineq01} is fairly rich. For this purpose,  the following lemmas are provided. We also recall a continuous real function $f$ defined on an interval 
$J$ is said to be operator monotone, if $A \leq B$ implies $f(A)\leq f(B)$ for all $A, B$ with spectra in $J$.

\begin{lemma}\label{Lemma 2.1}
Let  $f, f_1$ and $f_2$ be nonnegative continuous functions on $J\subseteq (0, \infty)$.
\begin{enumerate}
\item[(i)] If $f$ is  operator monotone and operator convex, then $f$ is  operator geodesically convex.
\item[(ii)]  If $f_1$ is  operator monotone and operator convex and $f_2$ is  operator geodesically convex, then $f_1\circ f_2$ is operator geodesically convex.
\item[(iii)] If $f_1$ and $f_2$ are  two operator geodesically convex functions, then so is $\alpha f_1 + f_2$ for $\alpha >0$.
\end{enumerate}
\end{lemma}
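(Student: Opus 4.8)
The plan is to treat the three parts independently, each reducing to a short chain of operator inequalities built on two ingredients already available: the operator arithmetic--geometric mean inequality $A \sharp_v B \leq A \nabla_v B$ recalled in the introduction, together with the defining inequalities of operator monotonicity, operator convexity, and operator geodesic convexity. None of the parts should require heavy machinery; the whole lemma is essentially a matter of composing the right estimates in the right order.

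For part (i), I would start from $A \sharp_v B \leq A \nabla_v B = (1-v)A + vB$. Applying the operator monotone function $f$ preserves this order, giving $f(A \sharp_v B) \leq f((1-v)A + vB)$. Operator convexity of $f$ then bounds the right-hand side by $(1-v)f(A) + v f(B) = f(A) \nabla_v f(B)$, and chaining the two estimates yields exactly \eqref{def_ineq01}.

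For part (ii), I would apply the hypotheses in the order: geodesic convexity of $f_2$, then monotonicity, then convexity of $f_1$. First, geodesic convexity of $f_2$ gives the operator inequality $f_2(A \sharp_v B) \leq (1-v)f_2(A) + v f_2(B)$. Since $f_1$ is operator monotone, I may apply $f_1$ to both sides without reversing the order; since $f_1$ is operator convex, it dominates the image of the convex combination by $(1-v)f_1(f_2(A)) + v f_1(f_2(B))$. Composing these gives $(f_1\circ f_2)(A\sharp_v B) \leq (f_1\circ f_2)(A)\nabla_v (f_1\circ f_2)(B)$. The one point requiring care is that the operator monotone step is legitimate only when the spectra of $f_2(A\sharp_v B)$ and of $(1-v)f_2(A)+vf_2(B)$ lie in the interval $J$ on which $f_1$ is monotone; since $f_2$ is a nonnegative continuous function mapping into $J$, this is the natural setting, and I expect this bookkeeping to be the only genuine subtlety in the whole lemma.

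Part (iii) is the most direct: writing $(\alpha f_1 + f_2)(A\sharp_v B) = \alpha f_1(A\sharp_v B) + f_2(A\sharp_v B)$, I would bound each summand by its geodesic-convexity estimate, using $\alpha > 0$ so that multiplying the $f_1$-inequality by $\alpha$ preserves the operator order, and then add the two resulting inequalities. Collecting the coefficients of $(1-v)$ and $v$ reproduces the definition of operator geodesic convexity for $\alpha f_1 + f_2$, completing the proof.
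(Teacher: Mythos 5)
Your proposal is correct and follows essentially the same route as the paper's own proof: part (i) chains the Young inequality $A\sharp_v B\leq A\nabla_v B$ with operator monotonicity and then operator convexity of $f$; part (ii) applies geodesic convexity of $f_2$, then monotonicity and convexity of $f_1$ in exactly the order the paper uses; and part (iii) is the same term-by-term estimate followed by regrouping. Your added remark about checking that the relevant spectra stay in the domain of $f_1$ is a reasonable piece of bookkeeping that the paper leaves implicit, but it does not change the argument.
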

\begin{proof} For strictly positive operators $A,B $ we have the well-known Young inequality $A\sharp_v B\leq A \nabla_v B$. Now, 
$(i)$  clearly holds by the assumptions on $f$ and applying the Young inequality. For $(ii)$ we have
\begin{align*}
f_1 \circ f_2(A\sharp_v B) &= f_1(f_2(A\sharp_v B))\nonumber\\
&\leq f_1(f_2(A)\nabla_v f_2(B))\hspace*{1cm}\text{(op. monotonicity of $f_1$ and \eqref{def_ineq01})}\nonumber\\
&\leq f_1(f_2(A)) \nabla_v f_1(f_2(B))\hspace*{1cm}\text{(op. convexity of $f_1$) }\nonumber\\
&= f_1 \circ f_2(A) \nabla_v f_1 \circ f_2( B).\nonumber
\end{align*}
Now, let $f_1$ and $f_2$ be two operator geodesically convex functions and $\alpha >0$. Then 
\begin{align*}
(\alpha f_1 + f_2)(A\sharp_v B) 
&= \alpha f_1(A\sharp_v B)+ f_2(A\sharp_v B) \nonumber\\
&\leq  \alpha \big(f_1(A)  \nabla_v f_1(B)\big) + \big(f_2(A)  \nabla_v f_2(B)\big)\nonumber\\
& =  \big(\alpha f_1(A)  + f_2(A)\big) \nabla_v \big(\alpha f_1(B)  + f_2(B)\big) \nonumber\\
&=(\alpha f_1 + f_2)(A) \nabla_v (\alpha f_1 + f_2)(B). \nonumber
\end{align*}
That is $\alpha f_1 + f_2$ is geodesically convex function as well.
\end{proof}

\begin{lemma}\label{Lemma 2.3}
Let  $f$ and $g$ be continuous functions from $(0, \infty)$ into itself.
\begin{enumerate}
\item[(i)] If $f(x)$ is an operator geodesically convex function so is $f\left(\frac{1}{x}\right)$.
\item[(ii)] If $g(x)$ be an operator geodesically concave function so is $g\left(\frac{1}{x}\right)$. 
\end{enumerate}
\end{lemma}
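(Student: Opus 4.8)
The plan is to reduce the claim for $f(1/x)$ to the geodesic convexity of $f$ itself by passing to inverse operators. Write $\tilde f(x)=f(1/x)$. By the composition rule for the continuous functional calculus, $\tilde f(A)=f(A^{-1})$ for every strictly positive operator $A$ with spectrum in $(0,\infty)$; moreover $A^{-1}$ again has spectrum in $(0,\infty)$, so $f$ may legitimately be evaluated at it. Thus the whole argument is a matter of substituting $A^{-1},B^{-1}$ into the defining inequality and translating back.

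The key ingredient is the inversion identity for the weighted geometric mean,
$$
\left(A\sharp_v B\right)^{-1}=A^{-1}\sharp_v B^{-1}.
$$
I would verify this straight from the formula $A\sharp_v B=A^{1/2}\left(A^{-1/2}BA^{-1/2}\right)^v A^{1/2}$: setting $X=A^{-1/2}BA^{-1/2}$, one has $\left(A\sharp_v B\right)^{-1}=A^{-1/2}X^{-v}A^{-1/2}$, while $A^{-1}\sharp_v B^{-1}=A^{-1/2}\left(A^{1/2}B^{-1}A^{1/2}\right)^v A^{-1/2}=A^{-1/2}\left(X^{-1}\right)^v A^{-1/2}$, and $\left(X^{-1}\right)^v=X^{-v}$ closes the gap.

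With this in hand, for $(i)$ I compute
$$
\tilde f\left(A\sharp_v B\right)=f\!\left(\left(A\sharp_v B\right)^{-1}\right)=f\!\left(A^{-1}\sharp_v B^{-1}\right)\leq f(A^{-1})\,\nabla_v\,f(B^{-1})=\tilde f(A)\,\nabla_v\,\tilde f(B),
$$
where the inequality is exactly \eqref{def_ineq01} applied to the strictly positive operators $A^{-1},B^{-1}$. Part $(ii)$ is identical with every inequality reversed, since operator geodesic concavity of $g$ means precisely that \eqref{def_ineq01} holds with $\geq$. The only genuine obstacle is the inversion identity for $\sharp_v$; as it is a standard and easily checked property of the Kubo--Ando geometric mean, I expect no serious difficulty, and the computation makes transparent \emph{why} geodesic (co)convexity is preserved under $x\mapsto 1/x$: the involution intertwines the geometric mean with itself on the left while only relabelling the arithmetic mean on the right.
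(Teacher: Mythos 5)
Your proposal is correct and follows essentially the same route as the paper: both rest on the inversion identity $(A\sharp_v B)^{-1}=A^{-1}\sharp_v B^{-1}$, substitute $A^{-1},B^{-1}$ into the defining inequality \eqref{def_ineq01}, and treat the concave case by reversing the inequality. The only difference is that you also verify the inversion identity from the explicit formula for $\sharp_v$, which the paper takes as known.
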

\begin{proof}
Let $A$ and $B$ be strictly positive operators. Thanks to the geometric mean property $(A\sharp_v B)^{-1} = A^{-1}\sharp_v B^{-1}$, we have the first result as follows
\begin{align*}
f((A\sharp_v B)^{-1})= f(A^{-1}\sharp_v B^{-1})\leq  f(A^{-1})\nabla_v f(B^{-1}).
\end{align*}
The second one is obtained  similarly. 
\end{proof}
In the above lemma if we let $f$ and $g$ be nonnegative continuous functions from $J\subset (0, \infty)$, we will assume that $ J$ contains both $Sp(A)$ and $Sp(A^{-1}) $, where $Sp(A)$ represents the spectrum of $A$.
The next  theorem presents a
connection between operator geodesically concavity and convexity.
We recall that  $g^*(x):=\dfrac{1}{g\left(\frac{1}{x}\right)}$ is called the adjoint of functin $g$.
\begin{theorem}\label{theorem2.4}
Let $g$ be an operator geodesically concave function. Then the functions $1/g$ and $g^*$ are operator geodesically convex.
 \end{theorem}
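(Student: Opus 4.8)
The plan is to prove the statement for $1/g$ directly and then obtain the statement for $g^*$ from it by invoking Lemma \ref{Lemma 2.3}. Throughout I read $1/g$ as the function $x\mapsto 1/g(x)$, so that by the functional calculus $(1/g)(A)=g(A)^{-1}$ for every strictly positive operator $A$ whose spectrum lies in the relevant interval (the spectral assumptions being exactly those recorded in the remark following Lemma \ref{Lemma 2.3}, so that $1/g$ and $x\mapsto g(1/x)$ are genuinely defined on the operators at hand).

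The two key inputs are standard facts about the inversion map on strictly positive operators: first, it is operator monotone \emph{decreasing}, i.e.\ $0<X\le Y$ implies $Y^{-1}\le X^{-1}$; and second, the scalar function $t\mapsto 1/t$ is operator convex on $(0,\infty)$, so that $\big((1-v)X+vY\big)^{-1}\le (1-v)X^{-1}+vY^{-1}$. Starting from the concavity hypothesis $g(A\sharp_v B)\ge g(A)\nabla_v g(B)=(1-v)g(A)+vg(B)$, I would first apply operator monotone decrease of inversion and then operator convexity of inversion to split the inverse of the arithmetic mean, obtaining
\begin{align*}
g(A\sharp_v B)^{-1}
&\le \big((1-v)g(A)+vg(B)\big)^{-1}\\
&\le (1-v)g(A)^{-1}+vg(B)^{-1}.
\end{align*}
Rewriting this as $(1/g)(A\sharp_v B)\le (1/g)(A)\nabla_v (1/g)(B)$ shows that $1/g$ is operator geodesically convex in the sense of Definition \ref{definition2.1}.

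For $g^*$, recall that $g^*(x)=1/g(1/x)$. By Lemma \ref{Lemma 2.3}(ii), since $g$ is operator geodesically concave, so is the function $h(x):=g(1/x)$. Because $g^*=1/h$, applying the result just established (for the reciprocal of a concave function) to $h$ immediately yields that $g^*$ is operator geodesically convex.

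The computations are routine once the two inversion facts are in place. The only point that genuinely requires care is the use of the \emph{operator convexity} of $t\mapsto t^{-1}$, rather than mere monotonicity: monotone decrease alone only passes the concavity inequality to $\big((1-v)g(A)+vg(B)\big)^{-1}$, and it is the operator convexity step that converts this into the arithmetic-mean bound $(1/g)(A)\nabla_v(1/g)(B)$ demanded by the definition. Everything else is bookkeeping with the geometric mean identity and the spectral domains.
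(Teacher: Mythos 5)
Your proposal is correct and takes essentially the same route as the paper: the paper also inverts the concavity inequality and then passes from $\big(g(A)\nabla_v g(B)\big)^{-1}$ to $g(A)^{-1}\nabla_v g(B)^{-1}$, phrased via the identity $\big(X\nabla_v Y\big)^{-1}=X^{-1}\,!_v\,Y^{-1}$ together with the harmonic--arithmetic mean inequality, which is exactly your appeal to the operator convexity of $t\mapsto t^{-1}$. For $g^*$ you apply Lemma \ref{Lemma 2.3}(ii) first and then take reciprocals, while the paper takes reciprocals first and then applies Lemma \ref{Lemma 2.3}(i) --- an immaterial reordering --- and the paper's additional direct computation for $g^*$ (yielding the refinement through the harmonic mean) is supplementary rather than a gap in your argument.
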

\begin{proof}
Let $A, B$ be strictly positive operators. For the operator geodesically concave function $g$  we have 
\begin{align}\label{thm_ineq01}
g(A\sharp_v B)\geq g(A)\nabla_v g(B).
\end{align}
Therefore,
\begin{align*}
(g(A\sharp_v B))^{-1} \leq  (g(A)\nabla_v g(B))^{-1} =g(A)^{-1} !_v g(B)^{-1} \leq  g(A)^{-1} \nabla_v  g(B)^{-1},
\end{align*}
which shows $1/g$  is operator geodesically convex. Combining this result with part $(i)$ of Lemma \ref{Lemma 2.3} easily yields  $g^*$ is operator geodesically convex function. However, in the sequel we give a direct proof providing a refinement inequality. Rewriting the inequality \eqref{thm_ineq01} with the operators $A^{-1}, B^{-1}$ and taking the inverse, we have
\begin{align}\label{e5}
\big(g(A^{-1}\sharp_v B^{-1})\big)^{-1} \leq \big(g(A^{-1}) \nabla_v g(B^{-1})\big)^{-1}.
\end{align}
Hence
\begin{align*}
g^*(A\sharp_v B)
&=\big(g(A^{-1}\sharp_v B^{-1})\big)^{-1}\nonumber\\
 &\leq \big(g(A^{-1}) \nabla_v g(B^{-1})\big)^{-1}\hspace*{1cm}\text{(by the inequality \eqref{e5})}\nonumber\\
 &=\big(g^*(A)^{-1} \nabla_v g^*(B)^{-1}\big)^{-1}\nonumber\\
 &=g^*(A) !_v g^*(B) \nonumber\\
 &\leq g^*(A)\nabla_v g^*(B),\nonumber
\end{align*}
as desired.
\end{proof}
\begin{example}
\begin{enumerate}
\item[(i)]  The simplest example of operator geodesically convex functions is $f(t)=t-a, a\geq 0$ on $(a, \infty)$. For $a=0$, $f(t)=t$  leads to the Young inequality $A\sharp_v B\leq A \nabla_v B$.
\item[(ii)] Another example is  $f(t)=\dfrac{1}{1-t}$ on $(0, 1)$ due to its operator convexity  and operator monotonicity \cite{Choi1974}, hence so is $f(1/t)=\dfrac{t}{1-t}$ on $(0, 1)$.
\item[(iii)] Let $f(t)=\dfrac{1}{t}$ on $(0, \infty)$. Then 
\begin{align*}
f(A\sharp_v B)= (A\sharp_v B)^{-1}= A^{-1}\sharp_v B^{-1}\leq A^{-1}\nabla_v B^{-1}=f(A)\nabla_v  f(B).
\end{align*}
This function  is an instance of  operator geodesically convex ones which is not operator monotone. 
\end{enumerate}

\end{example}

\begin{example}\label{example2.6}
\begin{enumerate}
\item[(i)] Every  operator monotone decreasing and operator concave  function $g$ on $J$ is an operator geodesically concave function.

\item[(ii)] It can be seen that the Young inequality $A\sharp_v B \leq A\nabla_v B$  is equivalent to 
$I-A\sharp_vB \geq (I-A) \nabla_v (I-B)$. This means the function $g(t) = 1-t$ on $(0,1)$ is an operator geodesically concave function. Similarly, $g(t)=a-t$ on $(0, a)$. By applying Theorem \ref{theorem2.4} it is deduced the functions $g(t)^{-1}=\dfrac{1}{a-t}$ on $(0, a)$ and $g^*(t)=\dfrac{t}{at-1}$ on $\left(\dfrac{1}{a},\infty\right) $ are operator geodesically convex.
\item[(iii)] Let $g(t)=a-\dfrac{1}{t}$, $t \in \left(\dfrac{1}{a}, \infty\right)$. Then
\begin{align*}
g(A\sharp_v B)= aI-(A\sharp_v B)^{-1}&= aI - (A^{-1}\sharp_v B^{-1})\nonumber\\
&\geq (aI-A^{-1})\nabla_v (aI-B^{-1})\nonumber\\
&=g(A)\nabla_v  g(B).
\end{align*}
Hence, $g(t)$ is operator geodesically concave  which in not operator monotone decreasing. 
\end{enumerate}
\end{example}
The corresponding results of Lemma \ref{Lemma 2.1}  hold for  operator geodesically concave functions as well. The next result provides an operator Acz\'{e}l  inequality involving this class of functions.

\begin{theorem} \label{theorem2.1}
Let $J$ be an interval of $(0,\infty)$, let  $g: J \to [0,\infty)$ be an operator geodesically concave function, and $p,q>1$ with $1/p+1/q=1$. For strictly operators $A$ and $B$ with spectra contained in $J$, we have
\begin{equation}\label{theorem2.1_ineq01}
g\left(A^p\sharp_{1/q} B^q\right) \geq   g(A^p)\sharp_{1/q} g(B^q).
\end{equation}
and
\begin{equation}\label{theorem2.1_ineq02}
\langle g\left(A^p\sharp_{1/q} B^q\right)x,x\rangle  \geq 
 \langle g(A^p)x,x\rangle^{1/p} \langle g(B^q)x,x\rangle^{1/q} 
\end{equation}
for all $x \in \mathcal{H}$. 
\end{theorem}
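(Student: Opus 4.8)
The plan is to derive both inequalities from the single defining inequality of operator geodesic concavity applied to $A^p$ and $B^q$ with weight $v=1/q$, combined with two elementary mean inequalities: one on the operator level and one scalar.

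For \eqref{theorem2.1_ineq01}, first I would apply the operator geodesic concavity of $g$ (the reversed form of \eqref{def_ineq01}; see Definition \ref{definition2.1}) to the positive operators $A^p$ and $B^q$, which gives
\begin{equation*}
g\left(A^p\sharp_{1/q}B^q\right)\geq g(A^p)\nabla_{1/q}g(B^q).
\end{equation*}
Since $g(A^p)$ and $g(B^q)$ are positive operators, the operator Young (arithmetic--geometric mean) inequality $X\sharp_{1/q}Y\leq X\nabla_{1/q}Y$ yields $g(A^p)\nabla_{1/q}g(B^q)\geq g(A^p)\sharp_{1/q}g(B^q)$. Chaining the two displays gives \eqref{theorem2.1_ineq01}. (Here one tacitly assumes that the spectra of $A^p$, $B^q$ and of $A^p\sharp_{1/q}B^q$ lie in $J$ so that $g$ may be applied, which is automatic when $J=(0,\infty)$.)

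For \eqref{theorem2.1_ineq02}, rather than taking quadratic forms in \eqref{theorem2.1_ineq01}, I would return to the sharper intermediate bound $g(A^p\sharp_{1/q}B^q)\geq g(A^p)\nabla_{1/q}g(B^q)$ and take $\langle\,\cdot\,x,x\rangle$ on both sides. Using $1-\tfrac1q=\tfrac1p$ and the linearity of the arithmetic mean inside the quadratic form, this produces
\begin{equation*}
\langle g\left(A^p\sharp_{1/q}B^q\right)x,x\rangle\geq \tfrac1p\langle g(A^p)x,x\rangle+\tfrac1q\langle g(B^q)x,x\rangle.
\end{equation*}
Finally, applying the scalar weighted arithmetic--geometric mean inequality $\tfrac1p a+\tfrac1q b\geq a^{1/p}b^{1/q}$ with the nonnegative numbers $a=\langle g(A^p)x,x\rangle$ and $b=\langle g(B^q)x,x\rangle$ delivers \eqref{theorem2.1_ineq02}.

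The main subtlety is the choice of route for \eqref{theorem2.1_ineq02}. The naive attempt is to take quadratic forms directly in \eqref{theorem2.1_ineq01} and hope for $\langle X\sharp_{1/q}Y\,x,x\rangle\geq\langle Xx,x\rangle^{1/p}\langle Yx,x\rangle^{1/q}$; but this quadratic-form version of the geometric-mean inequality is false. Indeed, writing $X\sharp_v Y=X^{1/2}(X^{-1/2}YX^{-1/2})^vX^{1/2}$ and setting $y=X^{1/2}x$, $C=X^{-1/2}YX^{-1/2}$, the claim reduces for a unit vector $u=y/\|y\|$ to $\langle C^v u,u\rangle\geq\langle Cu,u\rangle^v$, whereas the concavity of $t\mapsto t^v$ and Jensen's inequality force the \emph{reverse} inequality $\langle C^v u,u\rangle\leq\langle Cu,u\rangle^v$. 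The correct approach is therefore to keep the arithmetic-mean bound, whose quadratic form is linear, and only then invoke the scalar $AM$--$GM$ inequality; this is exactly why the passage through $\nabla_{1/q}$ (and not $\sharp_{1/q}$) is essential in the second part.
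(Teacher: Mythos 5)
Your proposal is correct and follows essentially the same route as the paper: operator geodesic concavity applied to $A^p$, $B^q$ with $v=1/q$, then the operator Young inequality for \eqref{theorem2.1_ineq01}, and the arithmetic-mean bound in quadratic form followed by the scalar AM--GM inequality for \eqref{theorem2.1_ineq02}. Your closing observation about why one must pass through $\nabla_{1/q}$ rather than $\sharp_{1/q}$ (since $\langle X\sharp_v Y\,x,x\rangle\leq\langle Xx,x\rangle^{1-v}\langle Yx,x\rangle^{v}$ goes the wrong way) is a correct and worthwhile clarification of a point the paper leaves implicit.
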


\begin{proof}
Since $g$ is an operator geodesically concave function, we have
\begin{equation}\label{theorem2.1_ineq03}
g(A\sharp_vB) \geq g(A) \nabla_v g(B) \geq g(A) \sharp_v g(B).
\end{equation}
Replacing $A,B$ with $A^p$,$B^q$, respectively, and putting $v=1/q$, then we obtain the desired inequality \eqref{theorem2.1_ineq01}.
From the first inequality of \eqref{theorem2.1_ineq03} and the arithmetic-geometric mean inequality, we have for all $x \in \mathcal{H}$, 
\begin{equation*}
\langle g\left(A^p\sharp_{1/q} B^q\right)x,x\rangle  \geq 
\frac{1}{p}\langle g(A^p)x,x\rangle  +\frac{1}{q}\langle g(A^q)x,x\rangle 
\geq \langle g(A^p)x,x\rangle^{1/p} \langle g(B^q)x,x\rangle^{1/q}.
\end{equation*}
\end{proof}
 
By letting $g(t)=1-t$ on $(0,1)$,  we have the following result.

\begin{corollary}\label{cor2.1}
Let $1/p+1/q=1$ with $p,q >1$. For commuting positive invertible operators $A$ and $B$ with spectra contained in $(0,1)$.
\begin{equation*}
1-\|(AB)^{1/2} x\|^2 \geq \left(1-\|A^{p/2}x\|^2\right)^{1/p} \left(1-\|B^{q/2}x\|^2\right)^{1/q}, 
\end{equation*}
for all $x \in \mathcal{H}$ with $||x||=1$. 
\end{corollary}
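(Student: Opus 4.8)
The plan is to specialize Theorem \ref{theorem2.1} to the function $g(t)=1-t$ on $J=(0,1)$. First I would observe that this $g$ is operator geodesically concave and maps into $[0,\infty)$, as established in Example \ref{example2.6}(ii), so the hypotheses of Theorem \ref{theorem2.1} are met once the spectral conditions are checked. Since $A,B$ have spectra in $(0,1)$, the operators $A^p$ and $B^q$ also have spectra in $(0,1)$, and because $A,B$ commute and are positive, the product $AB$ is positive with spectrum in $(0,1)$; thus every operator to which $g$ is applied stays inside $J$.

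The key simplification comes from commutativity. For commuting strictly positive operators the weighted geometric mean collapses to an ordinary product of powers, so that
\begin{align*}
A^p \sharp_{1/q} B^q = (A^p)^{1-1/q}(B^q)^{1/q} = (A^p)^{1/p}(B^q)^{1/q} = AB,
\end{align*}
using $1-1/q=1/p$. Hence $g\left(A^p \sharp_{1/q} B^q\right) = I-AB$, while $g(A^p)=I-A^p$ and $g(B^q)=I-B^q$.

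I would then invoke the pointwise inequality \eqref{theorem2.1_ineq02} for this $g$, which gives
\begin{align*}
\langle (I-AB)x,x\rangle \geq \langle (I-A^p)x,x\rangle^{1/p}\,\langle (I-B^q)x,x\rangle^{1/q}
\end{align*}
for every $x\in\mathcal{H}$. Finally, for a unit vector $x$ one rewrites each inner product as a squared norm: since $AB$, $A^p$ and $B^q$ are positive, $\langle ABx,x\rangle = \|(AB)^{1/2}x\|^2$, $\langle A^p x,x\rangle = \|A^{p/2}x\|^2$ and $\langle B^q x,x\rangle = \|B^{q/2}x\|^2$, together with $\langle x,x\rangle = 1$. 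Substituting these identities yields precisely the asserted inequality.

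The argument is essentially a direct substitution into Theorem \ref{theorem2.1}, so no serious obstacle arises; the one point needing care is the reduction $A^p \sharp_{1/q} B^q = AB$, which genuinely relies on the commutativity hypothesis. For non-commuting operators the geometric mean does not factor as a product of powers, and the clean scalar form in terms of $\|(AB)^{1/2}x\|$ would be lost.
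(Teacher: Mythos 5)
Your proof is correct and follows exactly the route the paper intends: it specializes inequality \eqref{theorem2.1_ineq02} of Theorem \ref{theorem2.1} to $g(t)=1-t$ on $(0,1)$, using commutativity to reduce $A^p\sharp_{1/q}B^q$ to $AB$ and rewriting the inner products as squared norms. The paper leaves these details implicit ("By letting $g(t)=1-t$ on $(0,1)$\ldots"), so your write-up simply makes the same argument explicit, including the useful check that the spectra of $A^p$, $B^q$, and $AB$ remain in $(0,1)$.
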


\section{Operator convex-log functions}

As it is stated in Definition \ref{definition1.1}, 
a function $f: (0, \infty) \rightarrow \mathbb{R}$ is called convex-log, if it can be written on the form $f(t)= h (\log t), \;t>0$ where $h: \mathbb{R}\rightarrow \mathbb{R}$ is a convex function. In this section we are going to present  the corresponding definition for operator functions and investigate some properties of that. 
\begin{definition}\label{sec3_def3.1}
Let $J\subset (0,\infty)$, $J_1,J_2\subset \mathbb{R}$.
We call a function $f:J\rightarrow \mathbb{R}$  operator convex-log, if it can be written on the form $f(t)= h(\log t), \;t >0$ where $h:  J_1\rightarrow J_2$ is an operator convex function.
Also, a function $g: J\to \mathbb{R}$ is said operator concave-log function, if it can be written on the form $g(t)= \varphi(\log t), \;t >0$ where $\varphi: J_1\rightarrow J_2$ is an operator concave function.
\end{definition}
\begin{remark} \label{remark3.2}
\begin{enumerate}
\item[(i)] We set $J\subset (0,\infty)$ and $J_1=J_2=(0,\infty)$ in Definition \ref{sec3_def3.1}. It is known that the function $\log(t)$ is operator concave on $(0, \infty)$. Further, every operator concave function $\varphi:(0,\infty)\rightarrow(0,\infty)$ is also operator monotone \cite{Hiai2010}. By applying these facts to the operator concave-log function $g$ we have    
\begin{align*}
g(A\nabla_v B)&= \varphi(\log (A\nabla_v B))\nonumber\\
&\geq  \varphi(\log (A)\nabla_v \log(B))\nonumber\\
&\geq \varphi(\log (A))\nabla_v \varphi(\log(B))\nonumber\\
&=g(A)\nabla_v g(B).\nonumber
\end{align*}
This means any operator concave-log function  $g: J\subset(0, \infty) \rightarrow(0,\infty)$ is an operator concave function.
\item[(ii)] There is a wide range of this class of functions. The simplest examples are the functions $(\log(t))^p$ on $ [1, \infty)$, which for $p\in [-1, 0]\cup[1, 2]$ are operator convex-log and for $p\in [0, 1]$ are operator concave-log.
\end{enumerate}
\end{remark}

In the rest of this section, we will use the following definition considred with gentle restrictions on the domains. These restrictions enable us to provide some results on the operator log-convex functions involving operator means.

\begin{definition}
We say a function $f: [1, \infty) \rightarrow [0,  \infty)$ is operator convex-log, if it can be written on the form $f(t)= h(\log t), \;t \geq1$ where $h:  [0, \infty)\rightarrow [0,  \infty)$ is an operator convex function. Similarly, a function $g:  [1, \infty) \rightarrow [0,  \infty)$ is said operator concave-log function, if it can be written on the form $g(t)= \varphi(\log t), \;t \geq1$ where $\varphi: [0, \infty)\rightarrow [0,  \infty)$ is an operator concave function.
\end{definition}
In what follows, the capital letters $A, B$
means $n\times n$ matrices or bounded linear operators on an $n$-dimensional complex Hilbert space
$\mathcal{H}$. 
For positive operators $A$ and $B$, the weak majorization  $A \prec_{w } B$ means that
 \begin{align*}
\sum_{j=1}^k \lambda_j (A) \leq \sum_{j=1}^k \lambda_j (B), \;\;\;\;\;\;\; k=1,2,\cdots, n,
 \end{align*}
 where $\lambda_1 (A) \geq \lambda_2 (A) \geq  \cdots  \geq \lambda_n (A)$ are the eigenvalues of $A$ listed in decreasing order. If equality holds when $k = n$, we have the majorization $A \prec B$.  See \cite{B1997} for more details. Also, the notation  $\preceq_{ols}$ is used for the so called Olson order. For positive operators, $A\preceq_{ols} B$ if and only if $A^r \leq B^r$ for every $r \geq 1$ \cite{O1971}. 
\begin{lemma} {\bf (\cite[Corollary 2.3]{AH1994})} \label{Lemma 3.3}
Let $A$ and $B$ be positive definite operators acting on a Hilbert space of finite dimension. Then for every $v \in [0, 1]$  
\begin{align*} \label{}
\log(A  \sharp_v B) \prec \log(A) \nabla_ v \log(B).
\end{align*}
\end{lemma}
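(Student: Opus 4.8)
The plan is to recast the asserted majorization of logarithms as a log-majorization of the operators themselves, and then to prove the latter. Recall that for a positive definite $X$ one has $\lambda_j(\log X)=\log\lambda_j(X)$, and for a Hermitian $H$ one has $\lambda_j(e^{H})=e^{\lambda_j(H)}$, the decreasing order being preserved in both cases because $\log$ and $\exp$ are increasing. Writing $H:=\log(A)\nabla_v\log(B)=(1-v)\log A+v\log B$, the claimed relation $\log(A\sharp_v B)\prec H$ is, after exponentiating the partial sums, equivalent to
\begin{align*}
\prod_{j=1}^{k}\lambda_j(A\sharp_v B)\le\prod_{j=1}^{k}\lambda_j(e^{H}),\qquad k=1,\dots,n,
\end{align*}
with equality at $k=n$; that is, to the log-majorization $A\sharp_v B\prec_{\log}e^{H}$. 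The case $k=n$ is just a determinant identity: from $A\sharp_v B=A^{1/2}(A^{-1/2}BA^{-1/2})^{v}A^{1/2}$ and multiplicativity of $\det$ one gets $\det(A\sharp_v B)=(\det A)^{1-v}(\det B)^{v}$, so $\sum_{j=1}^{n}\log\lambda_j(A\sharp_v B)=(1-v)\operatorname{tr}\log A+v\operatorname{tr}\log B=\operatorname{tr}H=\sum_{j=1}^{n}\lambda_j(H)$, settling the top index. So it remains to prove the inequalities for $k<n$.

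For these I would invoke the Ando--Hiai log-majorization inequality: for every $r\ge1$ and strictly positive $A,B$,
\begin{align*}
A^{r}\sharp_v B^{r}\prec_{\log}(A\sharp_v B)^{r}.
\end{align*}
Replacing $A,B$ by $A^{1/r},B^{1/r}$ turns this into $A\sharp_v B\prec_{\log}(A^{1/r}\sharp_v B^{1/r})^{r}$ for all $r\ge1$. I would then let $r\to\infty$ and use the Lie--Trotter type limit for the geometric mean,
\begin{align*}
\lim_{r\to\infty}(A^{1/r}\sharp_v B^{1/r})^{r}=\exp((1-v)\log A+v\log B)=e^{H},
\end{align*}
which comes from the first-order expansion $A^{1/r}\sharp_v B^{1/r}=I+\tfrac1r H+o(\tfrac1r)$ as $r\to\infty$. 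Since the eigenvalues are continuous functions of the operator, each partial-product inequality $A\sharp_v B\prec_{\log}(A^{1/r}\sharp_v B^{1/r})^{r}$ passes to the limit, yielding $A\sharp_v B\prec_{\log}e^{H}$ and hence, by the first paragraph, the lemma.

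The genuine obstacle is the Ando--Hiai inequality itself; the rest is bookkeeping. I would derive it from its normalized form $A\sharp_v B\le I\Rightarrow A^{r}\sharp_v B^{r}\le I$ $(r\ge1)$ by the antisymmetric tensor power device: since $\Lambda^{k}$ is a $*$-representation one has $\Lambda^{k}(A\sharp_v B)=(\Lambda^{k}A)\sharp_v(\Lambda^{k}B)$ and $\Lambda^{k}(T^{r})=(\Lambda^{k}T)^{r}$, while $\|\Lambda^{k}T\|=\prod_{j=1}^{k}\lambda_j(T)$ for $T>0$; applying the $k=1$ (spectral radius) case to $\Lambda^{k}A,\Lambda^{k}B$ then yields the full log-majorization over all $k$. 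The normalized implication rests on the operator monotonicity of $t\mapsto t^{p}$ for $0<p\le1$ together with the defining formula for $\sharp_v$, and this is precisely the technical core carried out in \cite{AH1994}. By contrast, the determinant identity and the Lie--Trotter limit invoked above are routine, so the whole difficulty is concentrated in the normalized Ando--Hiai step.
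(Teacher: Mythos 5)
Your proposal is correct and is essentially the proof that the paper points to: the paper does not prove Lemma \ref{Lemma 3.3} itself but imports it as Corollary 2.3 of Ando--Hiai \cite{AH1994}, and your argument --- recasting the majorization as the log-majorization $A\sharp_v B \prec_{\log} e^{(1-v)\log A + v\log B}$, settling $k=n$ by the determinant identity, and obtaining $k<n$ from the Ando--Hiai log-majorization $A^{r}\sharp_v B^{r}\prec_{\log}(A\sharp_v B)^{r}$ via the substitution $A\mapsto A^{1/r}$, $B\mapsto B^{1/r}$ and the Lie--Trotter type limit --- is exactly how that corollary is derived in the cited source, including the antisymmetric tensor power reduction to the normalized case. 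Deferring the normalized implication ($A\sharp_v B\le I \Rightarrow A^{r}\sharp_v B^{r}\le I$ for $r\ge 1$) to \cite{AH1994} is consistent with the paper's own treatment, which cites the entire statement from that reference.
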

\begin{theorem}\label{theorem2}
Let $f:[1,\infty)\rightarrow [0,\infty)$ be an  operator convex-log function. Then for every $A, B > I$ and $v \in [0, 1]$ 
\begin{align} \label{theorem2_ineq01}
f( A \sharp_v B) \prec_w f(A)  \nabla_ v  f(B).
\end{align}
\end{theorem}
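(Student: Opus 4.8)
The plan is to factor the claimed weak majorization through the representation $f=h\circ\log$ with $h$ operator convex, splitting the argument into a \emph{majorization} step governed by Lemma \ref{Lemma 3.3} and a \emph{Loewner-order} step governed by the operator convexity of $h$. Throughout, since $A,B>I$ we have $\log A,\log B>0$ and $A\sharp_v B>I$, whence $\log(A\sharp_v B)>0$; thus every operator that appears as an argument of $h$ has spectrum in $[0,\infty)$, exactly where $h$ is defined and, being operator convex and hence scalar convex, is convex.

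First I would invoke Lemma \ref{Lemma 3.3} to obtain the strong majorization
$$\log(A\sharp_v B)\prec \log(A)\nabla_v\log(B).$$
Then I would apply the classical principle that a scalar convex function carries majorization into \emph{weak} majorization: if $X\prec Y$ are Hermitian and $\phi$ is convex on an interval containing their spectra, then $\phi(X)\prec_w\phi(Y)$ (at the level of eigenvalues this is the Marshall--Olkin inequality $(\phi(x_i))\prec_w(\phi(y_i))$ whenever $x\prec y$). As $h$ is operator convex it is in particular scalar convex on $[0,\infty)$, so this gives
$$f(A\sharp_v B)=h(\log(A\sharp_v B))\prec_w h(\log(A)\nabla_v\log(B)).$$

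Next I would use the operator convexity of $h$ directly, in the Loewner order, on the arithmetic combination $\log(A)\nabla_v\log(B)=(1-v)\log A+v\log B$:
$$h\big((1-v)\log A+v\log B\big)\leq (1-v)h(\log A)+v\,h(\log B)=f(A)\nabla_v f(B).$$
Finally I would pass from this operator inequality to weak majorization---recall that for positive operators $X\leq Y$ forces $\lambda_j(X)\leq\lambda_j(Y)$ for every $j$, hence $X\prec_w Y$---and then chain the two weak majorizations by transitivity of $\prec_w$ to conclude $f(A\sharp_v B)\prec_w f(A)\nabla_v f(B)$.

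I expect the main obstacle to be the second step: one must recognize that applying $h$ cannot preserve the \emph{strong} majorization of Lemma \ref{Lemma 3.3}, and that the correct output is only \emph{weak} majorization, supplied by the Marshall--Olkin principle for convex functions. The operator convexity of $h$ then enters only in the third step, where it yields a genuine Loewner inequality that in turn weakly majorizes; the two notions are reconciled by transitivity of $\prec_w$. Checking the spectral conditions that keep every argument inside $[0,\infty)$ is routine, but it is precisely there that the hypothesis $A,B>I$ (rather than merely $A,B>0$) is used.
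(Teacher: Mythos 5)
Your proposal is correct and follows essentially the same route as the paper's proof: represent $f=h\circ\log$, apply Lemma \ref{Lemma 3.3} together with the fact that a (scalar) convex function maps majorization to weak majorization (the paper cites Proposition 4.1.4 of \cite{Hiai2010}, your Marshall--Olkin principle), and then use the operator convexity of $h$ as a Loewner inequality. Your write-up is in fact slightly more explicit than the paper's, since you spell out the passage from the Loewner inequality to weak majorization via Weyl monotonicity and the final chaining by transitivity of $\prec_w$, steps the paper leaves implicit.
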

\begin{proof}
Since $f$ is an operator convex-log, then there is an operator convex function $h:[0,\infty)\rightarrow[0,\infty)$ such that $f(t)= h(\log t),\; t\geq1$. Since $A, B >I$, so $A \sharp_v B > I$ and we have
\begin{align*} \label{}
f( A \sharp_v B) 
&=h(\log ( A \sharp_v B) ) \nonumber\\
&\prec_w h (\log ( A) \nabla_ v \log( B) )  \hspace*{1cm} \text{(by Lemma \ref{Lemma 3.3})}\nonumber\\
&\leq h (\log ( A)) \nabla_ v  h (\log( B) )  \hspace*{1cm} \text{(by op. convexity of $h$)} \nonumber\\
&= f(A) \nabla_ v  f(B).\nonumber
\end{align*}
In the second inequality we use the fact for every convex function $h$, $A \prec B$ implies $h(A) \prec_w h(B)$ \cite[Proposition 4.1.4]{Hiai2010}.
\end{proof}
\begin{remark}
The inequality \eqref{theorem2_ineq01} can be considered as a variant of operator geodesically convexity property for expansive operators. Also, it provides an elegant extension of Lemma \ref{Lemma 3.3}.
\end{remark}

In the sequel, we use the notation
$\mu(s,t):=\max\{S(s),S(t)\}$
where  $S(t) = \dfrac{t^{\frac{1}{t-1}}}{e \log (t^{\frac{1}{t-1}})}$  
for  $t>0$ is the so called Specht's ratio. Note that $\lim\limits_{t \rightarrow 1}S(t) = 1$ and $S(t) = S(1/t) > 1$ for $t \neq 1,\; t > 0$. For more details, see \cite{FMPS2005}.
We first give a reverse of Lemma \ref{Lemma 3.3} and then we apply it to show the next main result. The following lemmas are needed.
\begin{lemma} {\bf (\cite[Lemma 1]{GK2016})} \label{lemma 3.1_ineq01}
 Let $ 0 < s A \leq B \leq t A$, $0<s\leq t $ and $\nu\in [0, 1]$. Then 
\begin{align} \label{s}
A \nabla_\nu B \leq  \mu(s,t) ( A \sharp_\nu B ),
\end{align}
\end{lemma}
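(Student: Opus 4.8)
The plan is to reduce the operator inequality to a one‑variable scalar estimate by a congruence transformation together with the spectral theorem, and then to identify that scalar estimate with the extremal property defining Specht's ratio.

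First I would use that congruence by an invertible operator preserves the operator order. Since $A>0$, set $X := A^{-1/2}BA^{-1/2}$; the hypothesis $sA\le B\le tA$ becomes $sI\le X\le tI$, so the spectrum of $X$ lies in $[s,t]$. Multiplying the claimed inequality on both sides by $A^{-1/2}$ and using $A^{-1/2}(A\nabla_\nu B)A^{-1/2}=(1-\nu)I+\nu X$ together with $A^{-1/2}(A\sharp_\nu B)A^{-1/2}=X^\nu$, the assertion \eqref{s} is seen to be equivalent to
\begin{equation*}
(1-\nu)I+\nu X\le \mu(s,t)\,X^\nu .
\end{equation*}
Because $X$ is positive with spectrum contained in $[s,t]$, the functional calculus reduces this operator inequality to the scalar inequality
\begin{equation*}
(1-\nu)+\nu x\le \mu(s,t)\,x^\nu,\qquad x\in[s,t].
\end{equation*}

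Next I would study the function $g(x):=\dfrac{(1-\nu)+\nu x}{x^\nu}$ on $[s,t]$. A direct differentiation gives $g'(x)=\nu(1-\nu)(x-1)x^{-\nu-1}$, so $g$ is decreasing on $(0,1)$ and increasing on $(1,\infty)$, with the single minimum value $g(1)=1$. Consequently $g$ is unimodal, and its maximum over $[s,t]$ is attained at one of the endpoints, i.e. $\max_{x\in[s,t]}g(x)=\max\{g(s),g(t)\}$.

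Finally, the crucial ingredient is the extremal characterization of Specht's ratio, namely $S(h)=\max_{0\le\omega\le1}\dfrac{(1-\omega)+\omega h}{h^{\omega}}$ for $h>0$, whose maximizing weight and optimal value can be checked to coincide with the closed form of $S(h)$ used in the paper. Granting this, the fixed weight $\nu\in[0,1]$ yields $g(s)\le S(s)$ and $g(t)\le S(t)$, whence $\max\{g(s),g(t)\}\le\max\{S(s),S(t)\}=\mu(s,t)$. Combining with the previous paragraph establishes the scalar inequality, and hence \eqref{s}. The main obstacle, and the only genuinely analytic point, is proving this variational identity for Specht's ratio; everything else (the congruence reduction, the diagonalization, and the endpoint analysis of $g$) is routine, and once the identity is in hand the operator statement follows immediately from the positivity of $X$.
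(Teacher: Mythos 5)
Your proof is correct, but note that the paper itself offers no proof of this lemma: it is imported verbatim as \cite[Lemma 1]{GK2016}, so there is no in-paper argument to compare against. Your reconstruction is sound in every step: the congruence reduction to $X=A^{-1/2}BA^{-1/2}$ with $\mathrm{Sp}(X)\subseteq[s,t]$ and the identity $A^{-1/2}(A\sharp_\nu B)A^{-1/2}=X^\nu$ are standard; the derivative computation $g'(x)=\nu(1-\nu)(x-1)x^{-\nu-1}$ is right, giving the endpoint maximum; and the variational identity
\begin{equation*}
S(h)=\max_{0\le\omega\le1}\frac{(1-\omega)+\omega h}{h^{\omega}}
\end{equation*}
does hold — the critical point is $\omega^{*}=\frac{1}{\log h}-\frac{1}{h-1}\in(0,1)$, and substituting gives $\frac{(h-1)\,h^{1/(h-1)}}{e\log h}$, which is exactly the closed form of $S(h)$ in the paper. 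This identity is precisely Tominaga's reverse arithmetic--geometric mean inequality, which is also what the original source \cite{GK2016} relies on; the only cosmetic difference is that the standard proof applies the pointwise bound $(1-\nu)+\nu x\le S(x)x^{\nu}$ and then uses the unimodality of $S$ itself (decreasing on $(0,1)$, increasing on $(1,\infty)$) to get $S(x)\le\max\{S(s),S(t)\}$, whereas you put the unimodality on the ratio $g$ and invoke Specht's ratio only at the two endpoints. The two routes are essentially equivalent in content and length; yours has the small advantage of needing the variational identity only at $x=s$ and $x=t$ rather than a pointwise estimate on all of $[s,t]$. The one thing you should make explicit rather than assert as "can be checked" is the computation of $\omega^{*}$ and the optimal value, since it is the only non-routine step of the whole argument.
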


\begin{lemma} {\bf (\cite[Theorem 1]{GKF2018})} \label{lemma 3.7}
Let $H$ and $K$ be Hermitian matrices such that $ e^s e^H \preceq_{ols} e^K \preceq_{ols} e^t e^H$  for
some scalars $s \leq t$, and $v \in [0 , 1]$. Then for all $r>0$ and $k =1,2,\ldots, n$
\begin{align*} \label{}
  \lambda_k (e^{(1-v)H + v K} )  \leq  \mu^{\frac{1}{r}}\left(e^{rs},e^{rt}\right) \lambda_k (e^{r H} \sharp_v e^{r K})^{\frac{1}{r}},
\end{align*}
where $\preceq_{ols}$ is the so called Olson order. 
\end{lemma}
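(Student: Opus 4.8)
The plan is to reduce the statement to the reverse Young-type operator inequality of Lemma~\ref{lemma 3.1_ineq01} applied to the pair $A=e^{rH}$, $B=e^{rK}$, letting the Olson order supply, uniformly in $r>0$, exactly the two-sided sandwich $e^{rs}A\le B\le e^{rt}A$ that Lemma~\ref{lemma 3.1_ineq01} requires; the exponential and the $r$-th power are then disentangled at the eigenvalue level.

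The first step is to convert the hypothesis $e^s e^H\preceq_{ols}e^K\preceq_{ols}e^t e^H$ into ordinary operator inequalities for every power $r>0$. Because $e^s$ is a positive scalar commuting with $e^H$, the definition of the Olson order gives $(e^se^H)^\rho=e^{\rho s}e^{\rho H}\le e^{\rho K}$ and $e^{\rho K}\le e^{\rho t}e^{\rho H}$ for all $\rho\ge 1$. Choosing $\rho=r$ covers the range $r\ge 1$ directly. For $0<r<1$ I would instead start from the base case $\rho=1$, namely $e^se^H\le e^K\le e^te^H$, and apply the operator monotonicity of $x\mapsto x^{r}$ on $[0,\infty)$ (valid precisely for $r\in(0,1]$) to get $e^{rs}e^{rH}\le e^{rK}\le e^{rt}e^{rH}$ again. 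Thus, with $A=e^{rH}$ and $B=e^{rK}$, one has $0<e^{rs}A\le B\le e^{rt}A$ for every $r>0$; this is exactly the role of the Olson order: it furnishes the genuine power inequality when $r\ge1$, while plain monotonicity suffices when $r\le1$.

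Next I would invoke Lemma~\ref{lemma 3.1_ineq01} with $s\mapsto e^{rs}$, $t\mapsto e^{rt}$ and $\nu=v$, obtaining the operator inequality $e^{rH}\nabla_v e^{rK}\le\mu(e^{rs},e^{rt})\,(e^{rH}\sharp_v e^{rK})$. Since $\mu(e^{rs},e^{rt})$ is a positive scalar, Weyl's monotonicity of eigenvalues under the operator order yields, for each $k$,
\begin{align*}
\lambda_k\big((1-v)e^{rH}+v e^{rK}\big)\le \mu(e^{rs},e^{rt})\,\lambda_k\big(e^{rH}\sharp_v e^{rK}\big).
\end{align*}

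The remaining and principal difficulty is to connect the two left-hand sides, namely to establish the pointwise eigenvalue comparison
\begin{align*}
\lambda_k\big(e^{(1-v)H+vK}\big)^{r}=\lambda_k\big(e^{(1-v)(rH)+v(rK)}\big)\le\lambda_k\big((1-v)e^{rH}+v e^{rK}\big),
\end{align*}
that is, that the exponential (log-Euclidean) mean of $e^{rH}$ and $e^{rK}$ is dominated, eigenvalue by eigenvalue, by their arithmetic mean; granting this and chaining with the previous display gives $\lambda_k(e^{(1-v)H+vK})^r\le\mu(e^{rs},e^{rt})\,\lambda_k(e^{rH}\sharp_v e^{rK})$, and taking $r$-th roots (using $\lambda_k(X^{1/r})=\lambda_k(X)^{1/r}$ for $X>0$) produces exactly the asserted inequality with the factor $\mu^{1/r}(e^{rs},e^{rt})$. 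This comparison is an eigenvalue form of the convexity of $\exp$, and it is where I expect the real work to concentrate: the majorization principle for convex functions used in the proof of Theorem~\ref{theorem2}, as well as the Ando--Hiai majorization of Lemma~\ref{Lemma 3.3} applied to $A=e^{rH}$, $B=e^{rK}$, deliver only weak-majorization ($\prec_w$) relations, and a weak majorization $x\prec_w y$ need not force $x_k\le y_k$ for each $k$. The hard part will therefore be to upgrade this comparison to its genuinely pointwise $\lambda_k$ form by a direct spectral argument tailored to the exponential map, after which the two displays combine to close the proof.
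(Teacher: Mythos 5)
Your reduction has the right skeleton, and its outer layers are correct: the Olson hypothesis does give $e^{rs}e^{rH}\le e^{rK}\le e^{rt}e^{rH}$ for every $r>0$ (directly from the definition when $r\ge 1$, and by L\"owner--Heinz applied to the case $r=1$ when $0<r<1$), and Lemma~\ref{lemma 3.1_ineq01} together with Weyl's monotonicity principle then yields $\lambda_k\big(e^{rH}\nabla_v e^{rK}\big)\le \mu\left(e^{rs},e^{rt}\right)\lambda_k\big(e^{rH}\sharp_v e^{rK}\big)$. However, the proposal stops short of a proof at exactly its crux: the pointwise comparison $\lambda_k\big(e^{(1-v)H+vK}\big)^{r}\le\lambda_k\big(e^{rH}\nabla_v e^{rK}\big)$ is stated, correctly diagnosed as not obtainable from the weak-majorization tools you mention (Lemma~\ref{Lemma 3.3}, or the convexity--majorization principle used in Theorem~\ref{theorem2}), and then deferred to an unspecified ``direct spectral argument tailored to the exponential map.'' Since everything else in the argument is routine, leaving this step open is a genuine gap, not a detail.

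The gap can be closed in three lines, and this is precisely how the source \cite{GKF2018} of this lemma argues: apply convexity at the level of $\log$, where it is an honest operator inequality, rather than at the level of $\exp$, where it is not. Operator concavity of $\log$ gives
\begin{align*}
\log\big(e^{rH}\nabla_v e^{rK}\big)\;\ge\;(1-v)\log e^{rH}+v\log e^{rK}\;=\;r\big((1-v)H+vK\big),
\end{align*}
and L\"owner-order inequalities (unlike weak majorizations) do pass to each individual eigenvalue by Weyl's principle; combined with the spectral mapping identity $\lambda_k(\log Z)=\log\lambda_k(Z)$ for $Z>0$, this yields $\log\lambda_k\big(e^{rH}\nabla_v e^{rK}\big)\ge r\,\lambda_k\big((1-v)H+vK\big)$, i.e.\ $\lambda_k\big(e^{rH}\nabla_v e^{rK}\big)\ge \lambda_k\big(e^{(1-v)H+vK}\big)^{r}$, which is exactly your missing display. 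Chaining this with your reverse-Young eigenvalue estimate and taking $r$-th roots completes the proof. So your instinct that the exponential-versus-arithmetic comparison is the real content is right, but it is not hard: it only looks hard if one insists on upgrading a majorization, whereas the correct move is to produce an operator inequality \emph{before} exponentiating.
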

\begin{lemma}{\bf (\cite[Lemma 1]{GKF2018})} \label{lemma 3.8}
Let $A$ and $B$ be positive definite matrices such that $ s A \leq B \leq t A$ for some scalars $0<s\leq t$ and $v \in [0 , 1]$. Then
\begin{align*}
A^r \sharp_v B^r \leq \mu^r(s,t) (A \sharp_v B)^r\hspace*{1cm} 0<r\leq 1.
\end{align*}
\end{lemma}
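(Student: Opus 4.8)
The plan is to route the comparison between the geometric mean of the powers, $A^r \sharp_v B^r$, and the power of the geometric mean, $(A \sharp_v B)^r$, through the corresponding \emph{arithmetic} means, exploiting that for $0<r\le 1$ the map $x\mapsto x^r$ is simultaneously operator monotone and operator concave on $(0,\infty)$. The whole argument is then a chain of three inequalities, and no delicate spectral computation is required.

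First I would dominate the geometric mean by the arithmetic mean: the Young inequality $A\sharp_v B\le A\nabla_v B$, applied to the strictly positive operators $A^r$ and $B^r$, gives
\[
A^r \sharp_v B^r \le A^r \nabla_v B^r = (1-v)A^r + vB^r .
\]
Next I would pull the exponent $r$ outward using operator concavity of $x\mapsto x^r$ for $0<r\le1$, which yields $(1-v)A^r+vB^r\le\big((1-v)A+vB\big)^r$, i.e.
\[
A^r \nabla_v B^r \le (A\nabla_v B)^r .
\]

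The Specht ratio enters only at the last step. The hypothesis $sA\le B\le tA$ with $0<s\le t$ is \emph{exactly} the hypothesis of Lemma \ref{lemma 3.1_ineq01} (applied with $\nu=v$), so inequality \eqref{s} gives $A\nabla_v B\le \mu(s,t)\,(A\sharp_v B)$. Applying the operator monotone map $x\mapsto x^r$ to both sides and pulling the scalar $\mu(s,t)$ out of the $r$-th power, I get
\[
(A\nabla_v B)^r \le \big(\mu(s,t)\,(A\sharp_v B)\big)^r = \mu^r(s,t)\,(A\sharp_v B)^r .
\]
Concatenating the three displays produces the claimed bound $A^r \sharp_v B^r \le \mu^r(s,t)\,(A\sharp_v B)^r$.

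I expect the only real subtlety to be bookkeeping of where the restriction $0<r\le1$ is spent: it is used twice — once for operator concavity (the second step) and once for operator monotonicity (the third step) — and both properties fail for $r>1$, which is precisely why the statement is confined to $0<r\le1$. It is also worth flagging why the constant comes out as $\mu^r(s,t)$ rather than $\mu(s^r,t^r)$: Lemma \ref{lemma 3.1_ineq01} is invoked on $A,B$ themselves (producing $\mu(s,t)$ to the first power), and the exponent $r$ is introduced only afterward by raising the resulting scalar operator inequality to the $r$-th power. Getting this ordering right — reverse-Young first, power afterward — is the crux of arriving at the stated constant, and is the one place where a naive attempt (passing to $A^r,B^r$ inside Lemma \ref{lemma 3.1_ineq01}) would give the wrong factor $\mu(s^r,t^r)$.
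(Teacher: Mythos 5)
Your proof is correct: the chain $A^r\sharp_v B^r \le A^r\nabla_v B^r \le (A\nabla_v B)^r \le \mu^r(s,t)(A\sharp_v B)^r$, using Young's inequality, operator concavity and operator monotonicity of $x\mapsto x^r$ for $0<r\le 1$, and Lemma \ref{lemma 3.1_ineq01}, is exactly the standard argument, and your ordering (reverse Young first, then the $r$-th power) is indeed what produces the constant $\mu^r(s,t)$. The paper itself does not reprove this lemma but imports it from \cite{GKF2018}, where the proof proceeds along the same lines as yours.
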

\begin{proposition}\label{sec3_prop01}
Let $A$ and $B$ be positive  definite matrices such that $ e^s A \preceq_{ols} B \preceq_{ols} e^t A$  for some scalars $s \leq t$, and $v \in [0 , 1]$. Then
\begin{align} \label{e2}
 \lambda_k \big( \log A\; \nabla_ v \log B \big) 
\leq\lambda_k  \Big(\log \big( MN ( A \sharp_v B)\big)\Big),
\end{align}
and so 
\begin{align*} \label{}
\log A\; \nabla_ v \log B \prec_w \log \big( MN ( A \sharp_v B)\big),
\end{align*}
where $M:= \mu^{\frac{1}{r}}\left(e^{rs},e^{rt}\right) $, $N:= \mu\left(e^s,e^t\right)$,  and $0<r \leq 1$.
\end{proposition}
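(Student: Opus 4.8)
My plan is to read the hypothesis as a direct instance of Lemma \ref{lemma 3.7} and then eliminate the auxiliary power $r$ using Lemma \ref{lemma 3.8}. Concretely, I would set $H := \log A$ and $K := \log B$, so that $e^H = A$, $e^K = B$, and the Olson-order sandwich $e^s A \preceq_{ols} B \preceq_{ols} e^t A$ becomes precisely the hypothesis $e^s e^H \preceq_{ols} e^K \preceq_{ols} e^t e^H$ demanded by Lemma \ref{lemma 3.7}. Since $(1-v)H + vK = \log A \,\nabla_v \log B$ and $e^{rH} = A^r$, $e^{rK} = B^r$, Lemma \ref{lemma 3.7} then yields, for every $0 < r \le 1$ and each $k$,
\[
\lambda_k\big(e^{\log A \,\nabla_v \log B}\big) \leq M\, \lambda_k\big(A^r \sharp_v B^r\big)^{1/r}, \qquad M = \mu^{1/r}\big(e^{rs}, e^{rt}\big).
\]

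Second, I would control the remaining geometric-mean term $A^r \sharp_v B^r$. Since the Olson order refines the usual order (take $r = 1$ in its defining relation $X^r \le Y^r$), the hypothesis forces $e^s A \leq B \leq e^t A$ with $0 < e^s \leq e^t$. Hence Lemma \ref{lemma 3.8}, applied with the scalars $e^s, e^t$ and $0 < r \le 1$, gives $A^r \sharp_v B^r \leq N^r (A \sharp_v B)^r$, where $N = \mu(e^s, e^t)$. Passing to the $k$-th eigenvalue by Weyl monotonicity and using $\lambda_k(X^r) = \lambda_k(X)^r$ for positive definite $X$, this reads $\lambda_k(A^r \sharp_v B^r)^{1/r} \leq N\, \lambda_k(A \sharp_v B)$. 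Substituting this into the previous display collapses the two lemmas into a single power-free bound,
\[
\lambda_k\big(e^{\log A \,\nabla_v \log B}\big) \leq MN\, \lambda_k\big(A \sharp_v B\big).
\]

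Third, I would transport everything through the logarithm. Because $\log A \,\nabla_v \log B$ is Hermitian and $x \mapsto e^x$ is increasing, $\lambda_k(e^{\log A \,\nabla_v \log B}) = e^{\lambda_k(\log A \,\nabla_v \log B)}$; likewise the scalar multiple $MN(A \sharp_v B)$ is positive definite, so $\lambda_k(\log(MN(A \sharp_v B))) = \log\big(MN\,\lambda_k(A \sharp_v B)\big)$. Applying the increasing map $\log$ to the last display then reads off exactly inequality \eqref{e2}. Finally, this termwise inequality between the decreasingly ordered eigenvalue sequences, $\lambda_k(\log A \,\nabla_v \log B) \le \lambda_k(\log(MN(A \sharp_v B)))$ for all $k$, sums to $\sum_{j=1}^k \lambda_j(\log A \,\nabla_v \log B) \le \sum_{j=1}^k \lambda_j(\log(MN(A \sharp_v B)))$, which is precisely the asserted weak majorization $\log A \,\nabla_v \log B \prec_w \log(MN(A \sharp_v B))$.

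The spectral identities $\lambda_k(e^X) = e^{\lambda_k(X)}$ and $\lambda_k(\log cX) = \log(c\,\lambda_k(X))$ are routine, and the step from the termwise eigenvalue bound to $\prec_w$ is immediate. The point that needs care is the bookkeeping of $r$: Lemma \ref{lemma 3.7} is valid for all $r > 0$, whereas Lemma \ref{lemma 3.8} requires $0 < r \le 1$, so the common range $0 < r \le 1$ must be imposed, exactly as recorded in the statement. The main conceptual move is recognizing that the single Olson-order hypothesis is engineered to feed Lemma \ref{lemma 3.7} verbatim while simultaneously implying the weaker ordinary-order hypothesis that Lemma \ref{lemma 3.8} needs; once the two lemmas are aligned on the same $r$, the remainder is just the monotone transport of eigenvalues through $e^{(\cdot)}$ and $\log(\cdot)$.
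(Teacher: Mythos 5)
Your proof is correct and follows essentially the same route as the paper: both apply Lemma \ref{lemma 3.7} with $H=\log A$, $K=\log B$, then use the implication from Olson order to the usual order to invoke Lemma \ref{lemma 3.8} on the common range $0<r\le 1$, and finally transport the resulting eigenvalue bound through $\log$ and sum to get the weak majorization. Your explicit bookkeeping of the spectral identities and of the admissible range of $r$ matches the paper's argument step for step.
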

\begin{proof}
Considering the condition $ e^s A \preceq_{ols} B \preceq_{ols} e^t A$  in the form of $ e^s e^{\log A} \preceq_{ols} e^{\log B} \preceq_{ols} e^t e^{\log A}$, we can apply  Lemma \ref{lemma 3.7} by setting $H = \log A$, $K = \log B$, $M= \mu^{\frac{1}{r}}\left(e^{rs},e^{rt}\right) $ and $r>0$ as follows
\begin{align} \label{e3}
  \lambda_k \big(e^{(1-v)\log A + v \log B} \big)  
\leq  M  \lambda_k (A^r \sharp_v B^r)^{\frac{1}{r}}.
\end{align}
On the other hands, since he sandwich condition $ e^s A \preceq_{ols} B \preceq_{ols} e^t A$ implies $ e^s A \leq B \leq e^t A$, we can use Lemma \ref{lemma 3.8} for $0<r\leq 1$ as follows:
 \begin{align} \label{}
A^r \sharp_v B^r \leq \mu^{r}\left(e^s,e^t\right)  (A \sharp_v B)^r. \nonumber
\end{align}
So
 \begin{align} \label{}
   \lambda_k (A^r \sharp_v B^r)^{\frac{1}{r}}
  &\leq \Big(\mu^{r}\left(e^s,e^t\right) \lambda_k (A \sharp_v B)^r \Big)^{\frac{1}{r}}\nonumber\\
  &= \mu\left(e^s,e^t\right)  \lambda_k (A \sharp_v B).\label{e4}
\end{align}
Let $N=\mu\left(e^s,e^t\right)$. Combining the inequalities \eqref{e3} and \eqref{e4} implies 
\begin{align*} \label{}
 \lambda_k \big(e^{(1-v)\log A + v \log B} \big) 
\leq  MN \lambda_k \big(  ( A \sharp_v B)\big)=   \lambda_k \big( MN ( A \sharp_v B)\big). 
\end{align*}
Thereupon
\begin{align*} \label{}
 \log \Big( \lambda_k \big(e^{(1-v)\log A + v \log B} \big) \Big) 
\leq \log\Big( \lambda_k \big( MN ( A \sharp_v B)\big)\Big),\nonumber
\end{align*}
and hence
\begin{align*} \label{}
 \lambda_k \big( (1-v)\log A + v \log B \big) 
\leq\lambda_k  \Big(\log \big( MN ( A \sharp_v B)\big)\Big).
\end{align*}
\end{proof}
\begin{theorem}\label{theorem 3.10}
Let $g:[1,\infty)\rightarrow [0,\infty)$ be an operator concave-log  function, $A$ and $B$ be positive matrices such that $ e^s I \prec_{ols} e^s A \preceq_{ols} B \preceq_{ols} e^t A$  for some scalars $0< s \leq t$, and $v \in [0 , 1]$. Then for every $0<r \leq 1$ and $k =1,2,\ldots, n$ we have
\begin{align} \label{theorem3.2_ineq01}
 \lambda_k \big( g(A)\nabla_ v  g(B) \big) 
\leq S(e^{r t})^{\frac{1}{r}} S(e^{t}) \lambda_k  \big(g( A \sharp_v B) \big).
\end{align}

\end{theorem}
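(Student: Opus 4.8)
The plan is to write $g=\varphi\circ\log$ with $\varphi\colon[0,\infty)\to[0,\infty)$ operator concave, to exploit that such a $\varphi$ is automatically operator monotone (as recalled in Remark \ref{remark3.2}), to reduce the asserted operator eigenvalue estimate to a scalar computation through $\varphi$, and then to feed in Proposition \ref{sec3_prop01}, which is tailored precisely to the present sandwich hypothesis. The constant $S(e^{rt})^{1/r}S(e^{t})$ in \eqref{theorem3.2_ineq01} should emerge as the product $MN$ appearing in that proposition.

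First I would record the pointwise operator estimate coming from operator concavity. Since $g(A)=\varphi(\log A)$ and $g(B)=\varphi(\log B)$, operator concavity of $\varphi$ gives $g(A)\nabla_v g(B)=\varphi(\log A)\nabla_v\varphi(\log B)\le\varphi(\log A\nabla_v\log B)$. Applying $\lambda_k(\cdot)$, which is monotone for the L\"owner order, and using that an increasing scalar function satisfies $\lambda_k(\varphi(X))=\varphi(\lambda_k(X))$, I obtain $\lambda_k(g(A)\nabla_v g(B))\le\varphi(\lambda_k(\log A\nabla_v\log B))$.

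Next I would invoke Proposition \ref{sec3_prop01}: the hypothesis $e^s A\preceq_{ols}B\preceq_{ols}e^t A$ is exactly its assumption, so $\lambda_k(\log A\nabla_v\log B)\le\lambda_k(\log(MN(A\sharp_v B)))$ with $M=\mu^{1/r}(e^{rs},e^{rt})$ and $N=\mu(e^s,e^t)$. Because $S$ is increasing on $(1,\infty)$ and $1<e^{rs}\le e^{rt}$, $1<e^{s}\le e^{t}$ (here the extra requirement $e^s I\prec_{ols}e^s A$, i.e. $A>I$, together with $0<s\le t$, guarantees the arguments genuinely exceed $1$), the maxima collapse to $M=S(e^{rt})^{1/r}$ and $N=S(e^{t})$, so $MN=S(e^{rt})^{1/r}S(e^{t})$ is exactly the constant sought. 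Monotonicity of $\varphi$ then yields $\lambda_k(g(A)\nabla_v g(B))\le\varphi(\lambda_k(\log(MN(A\sharp_v B))))$, and since $MN$ is a scalar one has $\log(MN(A\sharp_v B))=(\log MN)I+\log(A\sharp_v B)$, giving $\lambda_k(\log(MN(A\sharp_v B)))=\log\!\big(MN\,\lambda_k(A\sharp_v B)\big)$; the right-hand side is therefore $g\!\big(MN\,\lambda_k(A\sharp_v B)\big)$.

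The remaining, and genuinely delicate, step is to move the scalar factor $MN$ from inside $g$ to the outside, i.e. to pass from $g(MN\,y)$ to $MN\,g(y)$ with $y=\lambda_k(A\sharp_v B)$. This is where nonnegativity and concavity of $g$ (equivalently of $\varphi$) must be used, through the star-shaped inequality $g(\lambda y)\le\lambda g(y)$ for $\lambda\ge1$, valid for a nonnegative concave function; note $MN\ge1$ and that $A,B>I$ forces $y=\lambda_k(A\sharp_v B)>1$ to lie in the domain. Granting this bound one gets $\varphi(\log(MN\,\lambda_k(A\sharp_v B)))=g(MN\,\lambda_k(A\sharp_v B))\le MN\,g(\lambda_k(A\sharp_v B))=MN\,\lambda_k(g(A\sharp_v B))$, which is \eqref{theorem3.2_ineq01}. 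I expect this transfer to be the main obstacle: the star-shaped estimate is an honest property of nonnegative concave functions only when the segment joining $y$ to the anchor point remains inside the domain, so the argument must lean on the lower spectral bound forced by the hypotheses, namely $A\sharp_v B\ge e^{sv}A>e^{sv}I$, rather than on a (nonexistent) value of $g$ at $0$. Carrying this out while keeping the constant equal to $MN$ is the crux of the proof.
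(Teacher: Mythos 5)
Your proposal retraces the paper's own proof step for step: the same decomposition $g=\varphi\circ\log$, the same use of operator concavity and operator monotonicity of $\varphi$ to move $\lambda_k$ inside and outside the functional calculus, the same appeal to Proposition \ref{sec3_prop01} with the identification $MN=S(e^{rt})^{1/r}S(e^{t})$, and the same final ``star-shaped'' transfer $g(MN\,y)\le MN\,g(y)$ at $y=\lambda_k(A\sharp_v B)$. Everything up to that last transfer is correct and agrees with the paper.

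The transfer itself, however --- which you candidly label the crux and only perform ``granting this bound'' --- is a genuine gap, and neither your sketch nor the paper closes it. The inequality $g(zx)\le z\,g(x)$ for $z\ge 1$ is a property of nonnegative concave functions whose domain contains the anchor point $0$ (one writes $x$ as a convex combination of $zx$ and $0$ and uses $g(0)\ge 0$); here $g$ lives on $[1,\infty)$, and on that domain the fact is simply false. A legitimate witness inside the hypothesis class is $g=\log$ (take $\varphi=\mathrm{id}$, which is operator concave from $[0,\infty)$ to $[0,\infty)$): then $g(zx)\le z\,g(x)$ reads $\log z\le (z-1)\log x$, which fails whenever $\log x<\frac{\log z}{z-1}$. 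Your proposed repair via the spectral lower bound $\lambda_k(A\sharp_v B)\ge e^{vs}$ therefore needs $vs\ge \frac{\log (MN)}{MN-1}$; but $MN$ depends only on $r$ and $t$ and stays bounded away from $1$ once $t$ is fixed, while $vs$ can be arbitrarily small (take $v$ small, or $s\ll t$), so the hypotheses do not supply this. Anchoring the concavity argument at the left endpoint $1$ instead of $0$ only gives $g(zx)\le\frac{zx-1}{x-1}\,g(x)$ with $\frac{zx-1}{x-1}>z$, i.e.\ a strictly worse constant, so that route cannot keep the constant equal to $MN$ either. To be clear, the published proof has exactly the same defect --- it invokes the unqualified ``fact'' that nonnegative concave functions satisfy $g(zx)\le zg(x)$ --- so you have in effect located a hole in the paper's argument rather than failed where it succeeds; but as written, your argument (like the paper's) only establishes $\lambda_k\big(g(A)\nabla_v g(B)\big)\le g\big(MN\,\lambda_k(A\sharp_v B)\big)$, not \eqref{theorem3.2_ineq01}, and repairing it would require either strengthening the hypotheses (e.g.\ demanding $vs\ge\frac{\log(MN)}{MN-1}$, or $A\ge e\,I$) or weakening the conclusion to this additive form.
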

\begin{proof}
Since $g$ is an operator concave-log function, then there is an operator concave function $\varphi: [0, \infty) \rightarrow [0, \infty) $ such that $g(t)= \varphi(\log t), \; t\geq 1$. Also, according to  Remark \ref{remark3.2}, $g$ is an operator concave function. On the other hand, the sandwich condition $ e^s I \prec_{ols} e^s A \preceq_{ols} B \preceq_{ols} e^t A$ with $0<s<t$ implies $A, B>I$. Compute
\begin{align*} 
\lambda_k \big( g(A)\nabla_ v  g(B) \big) 
&= \lambda_k \big( \varphi (\log ( A)) \nabla_ v \varphi (\log( B) )\big) \nonumber\\
&\leq \lambda_k \Big(\varphi \big(\log ( A) \nabla_ v \log( B) \big)\Big)\hspace*{1cm} \text{( op. concavity of $\varphi$)}\nonumber\\
&= \varphi \Big( \lambda_k \big(\log ( A) \nabla_ v \log( B)\big)\Big)\nonumber\\
&\leq \varphi  \Big( \lambda_k \big(\log MN ( A \sharp_v  B)\big)\Big)\hspace*{1cm} \text{( \eqref{e2} and monotonicity of $\varphi$)}\nonumber\\
&= \lambda_k   \Big( \varphi \big(\log MN ( A \sharp_v  B)\big)\Big)\nonumber\\
&=  \lambda_k   \Big( g \big(MN ( A \sharp_v  B)\big)\Big)\nonumber\\
&\leq  MN \lambda_k   \Big( g \big(  A \sharp_v  B \big)\Big),\hspace*{1cm} \text{(concavity of g)}\nonumber
\end{align*}
where constants $M$ and $N$ are defined in Proposition \ref{sec3_prop01}. On the other hand, since  $S(h)$ is an increasing function on $[1, \infty)$ and $1 < e^{s} \leq e^{t}$ for $0<s\leq t$, therefore $MN=S(e^{rt})^{\frac{1}{r}} S(e^{t})$ as desired. For the last inequality, given that $MN\geq1$ we use the fact for every nonnegative concave function
  $g$ and every $z>1$, $g (zx) \leq z g(x)$.
\end{proof}
\begin{remark}
Under the assumptions of Theorem \ref{theorem 3.10} we immediately have 
\begin{align*} \label{}
 g(A)  \nabla_ v g(B)  \prec_w \mu\; g( A \sharp_v B),
\end{align*}
where $\mu=  S(e^{r t })^{\frac{1}{r}} S(e^{t})$. This inequality provides a variant of the geodesically concavity property 
 \begin{align*} \label{}
 g(a)  \nabla_ v  g(b) \leq g( a \sharp_v b),
\end{align*}
for operator concave-log functions. Also, the inequality \eqref{theorem3.2_ineq01}    is equivalent to  the existence of a unitary operator $U$ satisfying
\begin{align} \label{remark3.1_ineq01}
  g(A)   \sharp_v  g(B)  \leq g(A)  \nabla_ v g(B)  \leq \mu\; U g( A \sharp_v B) U^*.
\end{align}
\end{remark}
By applying Theorem \ref{theorem 3.10} we can get a variant of operator Acz\'{e}l  inequality involving operator concave-log functions as follows:

\begin{corollary}
Let $g:[1,\infty)\rightarrow [0,\infty)$  be an operator concave-log  function, $\frac{1}{p}+ \frac{1}{q}=1, p,q>1$ and  $ e^s I \prec_{ols} e^s A^p \preceq_{ols} B^q \preceq_{ols} e^t A^p$  for some scalars $0< s \leq t$. Then, there is a unitary operator $U$ such that for all  $x \in \mathcal{H}$
\begin{align*} 
&g (A^p) \sharp_\frac{1}{q} g( B^q)  \leq \mu\cdot  U g(A^p\sharp_\frac{1}{q} B^q) U^* , \nonumber\\ 
& \langle g(A^p)Ux, Ux\rangle^{1/p} \langle g(B^q)Ux, Ux\rangle^{1/q} \leq \mu\cdot\langle g\left(A^p\sharp_{1/q} B^q\right) Ux, Ux\rangle, \nonumber
\end{align*}
where  $\mu :=  S(e^{r t })^{\frac{1}{r}} S(e^{t})$ and $0<r \leq 1$.
\end{corollary}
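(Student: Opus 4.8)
The plan is to obtain the corollary as a direct specialization of Theorem~\ref{theorem 3.10}, in the same way Corollary~\ref{cor2.1} was deduced from Theorem~\ref{theorem2.1}. First I would substitute $A\mapsto A^{p}$, $B\mapsto B^{q}$ and $v\mapsto 1/q$ and check that every hypothesis transfers: $g$ is the same operator concave-log function, the assumed sandwich condition $e^{s}I\prec_{ols}e^{s}A^{p}\preceq_{ols}B^{q}\preceq_{ols}e^{t}A^{p}$ is exactly the hypothesis of Theorem~\ref{theorem 3.10} for the pair $(A^{p},B^{q})$, and, since $1-\tfrac1q=\tfrac1p$, the weight-$1/q$ arithmetic mean expands as $\nabla_{1/q}=\tfrac1p(\cdot)+\tfrac1q(\cdot)$. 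Feeding this into the unitary reformulation \eqref{remark3.1_ineq01} of \eqref{theorem3.2_ineq01} produces a unitary $U$ with
\[
g(A^{p})\sharp_{1/q}g(B^{q})\le g(A^{p})\nabla_{1/q}g(B^{q})\le \mu\,U\,g\!\left(A^{p}\sharp_{1/q}B^{q}\right)U^{*},
\]
whose outer inequality is already the first assertion of the corollary.

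For the second assertion I would pass to quadratic forms. Pairing the middle operator inequality against the vector $Ux$ and using the cancellation $\langle U\,g(A^{p}\sharp_{1/q}B^{q})\,U^{*}Ux,Ux\rangle=\langle g(A^{p}\sharp_{1/q}B^{q})x,x\rangle$ that comes from $U^{*}U=I$, the splitting of $\nabla_{1/q}$ yields
\[
\tfrac1p\langle g(A^{p})Ux,Ux\rangle+\tfrac1q\langle g(B^{q})Ux,Ux\rangle\le \mu\,\langle g\!\left(A^{p}\sharp_{1/q}B^{q}\right)x,x\rangle .
\]
I would then apply the scalar weighted arithmetic--geometric mean inequality $\tfrac1p a+\tfrac1q b\ge a^{1/p}b^{1/q}$ to the nonnegative numbers $a=\langle g(A^{p})Ux,Ux\rangle$ and $b=\langle g(B^{q})Ux,Ux\rangle$, exactly the final step in the proof of Theorem~\ref{theorem2.1}, to reach the claimed Acz\'{e}l-type bound.

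The only genuine care-point, rather than a real obstacle, is the bookkeeping of the unitary $U$. Because Theorem~\ref{theorem 3.10} is an eigenvalue (weak majorization) statement, the operator inequality holds only up to conjugation by $U$, so one must test against $Ux$ and keep track of where the conjugation cancels; this is precisely what fixes the positions of the vectors in the final inner products. Beyond this, no new analytic ingredient is required: everything reduces to the already-established Theorem~\ref{theorem 3.10} together with the elementary arithmetic--geometric mean inequality, and the constant $\mu=S(e^{rt})^{1/r}S(e^{t})$ with $0<r\le1$ is inherited verbatim.
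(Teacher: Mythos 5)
Your treatment of the first inequality is fine and coincides with the paper's: substitute $A\mapsto A^{p}$, $B\mapsto B^{q}$, $v\mapsto 1/q$ into \eqref{remark3.1_ineq01}. The gap is in the second inequality. When you pair $g(A^{p})\nabla_{1/q}g(B^{q})\le \mu\, U g(A^{p}\sharp_{1/q}B^{q})U^{*}$ against $Ux$, the cancellation $U^{*}Ux=x$ puts the vector $x$, not $Ux$, on the right-hand side; what your argument yields is
\[
\langle g(A^{p})Ux,Ux\rangle^{1/p}\langle g(B^{q})Ux,Ux\rangle^{1/q}\le \mu\,\langle g(A^{p}\sharp_{1/q}B^{q})\,x,x\rangle ,
\]
whereas the corollary asserts the same bound with $\langle g(A^{p}\sharp_{1/q}B^{q})\,Ux,Ux\rangle$ on the right. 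These are genuinely different statements: $U$ need not commute with $g(A^{p}\sharp_{1/q}B^{q})$, so the conjugation cannot be cancelled with the same vector appearing on both sides, and no choice of test vector repairs this (pairing against $x$ instead leaves $U^{*}x$ on the right). Note that the corollary's second inequality, holding for all $x$ with $U$ unitary, is equivalent to the $U$-free statement $\langle g(A^{p})y,y\rangle^{1/p}\langle g(B^{q})y,y\rangle^{1/q}\le\mu\,\langle g(A^{p}\sharp_{1/q}B^{q})y,y\rangle$ for every $y$; an argument routed through Theorem \ref{theorem 3.10}, which controls only eigenvalues (i.e.\ operator order up to unitary conjugation), cannot produce such a $U$-free bound.

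The paper proves the second inequality by a direct argument that never uses $U$. The Olson-order hypothesis implies $e^{s}A^{p}\le B^{q}\le e^{t}A^{p}$, so Lemma \ref{lemma 3.1_ineq01} gives the reverse Young inequality $A^{p}\nabla_{1/q}B^{q}\le S(e^{t})\,(A^{p}\sharp_{1/q}B^{q})\le \mu\,(A^{p}\sharp_{1/q}B^{q})$. Then, since $g$ is operator concave (Remark \ref{remark3.2}) and operator monotone (being the composition $\varphi\circ\log$ of operator monotone functions), one has, as operator inequalities,
\[
\mu\, g\left(A^{p}\sharp_{1/q}B^{q}\right)\;\ge\; g\left(\mu\,(A^{p}\sharp_{1/q}B^{q})\right)\;\ge\; g\left(A^{p}\nabla_{1/q}B^{q}\right)\;\ge\;\tfrac{1}{p}\,g(A^{p})+\tfrac{1}{q}\,g(B^{q}),
\]
where the first step uses $g(\mu X)\le\mu\,g(X)$ for nonnegative concave $g$ and $\mu\ge 1$. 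Evaluating these quadratic forms at any vector (in particular at $Ux$) and applying the scalar AM--GM inequality finishes the proof with matching vectors throughout. So your first half stands, but your second half must be replaced by this reverse-Young route rather than deduced from the unitary inequality.
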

\begin{proof}
Putting $A:=A^p$, $B:=B^q$ and $\nu:=1/q$ in the inequality \eqref{remark3.1_ineq01}, we have the first alleged inequality. For the second,  we first note that  the condition $ e^s I \prec_{ols} e^s A^p \preceq_{ols} B^q \preceq_{ols} e^t A^p$ implies $ e^s I \leq e^s A^p \leq B^q \leq e^t A^p$. So, by applying Lemma \ref{lemma 3.1_ineq01} for the operators $A^p$ and $B^q$ we will get
\begin{align} \label{corollary3.1_ineq03}
A^p \nabla_\nu B^q \leq  \mu\left(e^s,e^t\right) ( A^p \sharp_\nu B^p ) = S(e^t)  ( A^p \sharp_\nu B^p )\leq \mu\cdot ( A^p \sharp_\nu B^p ).
\end{align}
As it is shown in Remark \ref{remark3.2}, $g$ is an operator concave function. Also, it is  composition of two operator monotone functions. So, we can write
\begin{eqnarray*}
\mu \cdot\langle g\left(A^p\sharp_{1/q} B^q\right) Ux, Ux\rangle &\geq&  \langle g\left(\mu (A^p\sharp_{1/q} B^q)\right)Ux, Ux\rangle \hspace*{1cm}(\text{concavity of $g$})\\
&\geq&   \langle g(A^p\nabla_{1/q} B^q) Ux, Ux\rangle \hspace*{1cm}(\text{op. monotonicity of}\,\, g \;\text{with} \,\,\eqref{corollary3.1_ineq03})\\
&\geq& \langle \big(\frac{1}{p}g(A^p)+\frac{1}{q}g(B^q)\big) Ux, Ux \rangle \hspace*{1cm} (\text{op. concavity of}\,\,g)\\
&=& \frac{1}{p} \langle g(A^p) Ux, Ux\rangle + \frac{1}{q} \langle g(A^q)Ux, Ux\rangle \\
&\geq& \langle g(A^p)Ux, Ux\rangle^{1/p} \langle g(B^q)Ux, Ux\rangle^{1/q} \hspace*{1cm} (\text{AM-GM inequality}).
\end{eqnarray*}
\end{proof}

\section{ Another type of geodesically convex function }

\begin{definition} {\bf (\cite{H2019})}
A function $F : B(\mathcal{H})^+\rightarrow  \mathbb{R}$  defined in the set $B(\mathcal{H})^+$ of
positive definite operators on a finite dimensional Hilbert space $\mathcal{H}$ is said to
be a geodesically convex if
\begin{align}\label{definition 4.1}
F(A\sharp_v B)  \leq F(A) \nabla_v F(B),\quad (v \in [0,1]).
\end{align}
\end{definition}
The functions $F(A)=tr(e^A)$,  $F(A)=tr(A^\alpha),  \alpha\geq 1$, $\lambda_1(e^A)$ and $\lambda_1(A^\alpha), \alpha\geq1$ are examples of  geodesically convex functions. For more results and  examples, see \cite{SH2015}.

Bourin and Hiai in \cite[Proposition 3.5]{BH2014} showed that for every $A, B > 0$, $v \in [0, 1]$ and $k=1,2,\cdots, n$, 
\begin{align}\label{e1}
 \prod_{j=1}^k \lambda_j (A\sharp_v B) \leq \Big\lbrace \prod_{j=1}^k \lambda_j (A) \Big\rbrace \sharp_v  \Big\lbrace \prod_{j=1}^k \lambda_j (B)\Big\rbrace,
\end{align}
and 
\begin{align}\label{e6}
 \prod_{j=n+1-k}^n \lambda_j (A\sharp_v B) \geq \Big\lbrace \prod_{j=n+1-k}^n \lambda_j (A) \Big\rbrace \sharp_v\Big\lbrace\prod_{j=n+1-k}^n \lambda_j (B) \Big\rbrace.
\end{align}
It is deduced from the inequality \eqref{e1} that  $F(A)= \prod\limits_{j=1}^k \lambda_j (A)$ and $F(A)= \det(A)$ are also geodesically convex  functions.
In this section, we investigate geodesically convexity property of some new functions involved with operator functions and achieve  generalization of the above Minkowski type inequalities, simultaneously.

It is shown in \cite[Theorem 2.3]{SH2015} if $ h $ is an increasing convex function on $(0, \infty)$, then $\sum\limits_{j=1}^k h(\lambda_j (A))$ is geodesically convex. In the following, we give a corresponding result for  increasing geometrically convex functions on $(0, \infty)$.

\begin{lemma}\label{lemma4.2}
Let $ g $ be an increasing geometrically convex function on $(0, \infty)$. Then the function $F(A)= \sum\limits_{j=1}^k g(\lambda_j (A))$, $k=1,2,\cdots, n$, is geodesically convex function.
\end{lemma}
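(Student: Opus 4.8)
The plan is to unfold the definition \eqref{definition 4.1} of geodesic convexity and reduce the operator claim to a scalar majorization argument driven by the Bourin--Hiai log-majorization \eqref{e1}. Write $\lambda_j := \lambda_j(A \sharp_v B)$, $\alpha_j := \lambda_j(A)$ and $\beta_j := \lambda_j(B)$, all arranged in decreasing order, so that the assertion $F(A \sharp_v B) \le F(A) \nabla_v F(B)$ becomes
\begin{align*}
\sum_{j=1}^k g(\lambda_j) \le (1-v)\sum_{j=1}^k g(\alpha_j) + v\sum_{j=1}^k g(\beta_j).
\end{align*}
The key auxiliary object is $h(t) := g(e^t)$ on $\mathbb{R}$. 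Taking logarithms in the geometric convexity inequality $g(a^{1-v}b^v) \le g(a)^{1-v} g(b)^v$ and substituting $a = e^x$, $b = e^y$ shows that $\log h$ is convex; hence $h$ is log-convex and therefore convex, and it is increasing since $g$ and $\exp$ are. I would record this as a preliminary observation, paralleling the role played by increasing convexity in the companion result of \cite{SH2015}.

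Next I would exploit \eqref{e1}. Read for each index $m = 1, \dots, k$ (recall \eqref{e1} is stated for every such index), it gives $\prod_{j=1}^m \lambda_j \le \prod_{j=1}^m \alpha_j^{1-v}\beta_j^v$, because the scalar weighted geometric mean is $\{\cdot\}\sharp_v\{\cdot\} = (\cdot)^{1-v}(\cdot)^v$. Writing $c_j := \alpha_j^{1-v}\beta_j^v$ and taking logarithms turns this into the weak majorization $(\log\lambda_j)_{j=1}^k \prec_w (\log c_j)_{j=1}^k$, both vectors being already in decreasing order since $\alpha_j, \beta_j$, and hence $c_j$, decrease. Applying the increasing convex function $h$ transports a weak majorization to a weak majorization and thus preserves the associated sum inequality (a standard fact, see \cite{B1997}); therefore
\begin{align*}
\sum_{j=1}^k g(\lambda_j) = \sum_{j=1}^k h(\log\lambda_j) \le \sum_{j=1}^k h(\log c_j) = \sum_{j=1}^k g\big(\alpha_j^{1-v}\beta_j^v\big).
\end{align*}

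Finally, applying the scalar geometric convexity of $g$ termwise and then the weighted arithmetic--geometric mean inequality gives $g(\alpha_j^{1-v}\beta_j^v) \le g(\alpha_j)^{1-v} g(\beta_j)^v \le (1-v)g(\alpha_j) + v g(\beta_j)$, and summing over $j$ yields the desired bound. The genuinely nontrivial input is \eqref{e1}, which is already available; consequently the step that most needs care is the middle one, namely verifying that $g \circ \exp$ is \emph{increasing} convex. Monotonicity of $g$ is essential here, since weak majorization (rather than full majorization) is all that \eqref{e1} delivers for $k < n$, and it is exactly this requirement that distinguishes the argument from the purely convex case.
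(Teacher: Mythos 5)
Your proposal is correct and follows essentially the same route as the paper's proof: both read \eqref{e1} as a scalar log weak majorization, observe that geometric convexity of $g$ forces $g\circ\exp$ to be increasing and convex, push the majorization through that function (the paper cites this transfer step as \cite[Proposition 4.1.6]{Hiai2010}), and finish with termwise geometric convexity of $g$. The only difference is cosmetic: the paper inserts H\"{o}lder's inequality to reach the stronger intermediate bound $F(A)\sharp_v F(B)$ before relaxing to $F(A)\nabla_v F(B)$ by AM--GM, whereas you apply AM--GM termwise and then sum.
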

\begin{proof}
First, note that the inequality \eqref{e1} is equivalent to the following one
\begin{align}\label{lemma4.2_ineq01}
 \prod_{j=1}^k \lambda_j (A\sharp_v B) \leq  \prod_{j=1}^k \lambda_j (A) ^v  \lambda_j (B)^{1-v} = \prod_{j=1}^k \lambda_j (A^v)   \lambda_j (B^{1-v}).
\end{align}
Also, since $g(t)$ is  a  geometrically convex function, $g(e^t)$ is a convex function due to the following inequality
\begin{equation*}
g(e^{x \nabla_v y})= g(e^{(1-v)x} e^{v y})= g(e^x  \sharp_v e^y)\leq g(e^x)  \sharp_v g(e^y) \leq g(e^x) \nabla_v g(e^y).
\end{equation*}
Now, by applying a  classical result  on the function $g$ \cite[Proposition 4.1.6]{Hiai2010} and the inequality \eqref{lemma4.2_ineq01}  we have
\begin{align*}
 \sum_{j=1}^k g\big(\lambda_j (A\sharp_v B) \big) \leq \sum_{j=1}^k g\big(\lambda_j ^{1-v}( A ) \lambda_j^v(B)\big).
\end{align*}
Hence, we can write
\begin{align*}
 \sum_{j=1}^k g\big(\lambda_j (A\sharp_v B) \big)
 &\leq \sum_{j=1}^k g\big(\lambda_j ^{1-v}( A ) \lambda_j^v(B)\big)\nonumber\\
 &\leq \sum_{j=1}^k \big( g(\lambda_j ( A ))\big)^{1-v} g\big((\lambda_j(B))\big)^v\hspace*{1cm} \text{(g. convexity of $g$)}\nonumber\\
 &\leq\Big\lbrace \sum_{j=1}^k g\big(\lambda_j ( A )\big)\Big\rbrace \sharp_v \Big\lbrace \sum_{j=1}^k g\big(\lambda_j(B)\big)\Big\rbrace  \hspace*{1cm} \text{(Cauchy-Schwarz inequality)} \nonumber\\
 &\leq\Big\lbrace \prod_{j=1}^k g\big(\lambda_j ( A )\big)\Big\rbrace  \nabla_v\Big\lbrace \prod_{j=1}^k g \big(\lambda_j(B)\big)\Big\rbrace, \hspace*{1cm} \text{(AM-GM inequality)}.\nonumber
\end{align*}
\end{proof}

In the sequel, we present some results involving operator functions.
\begin{lemma}{\bf (\cite{KF2018})}\label{l1}
 Let $ g $ be a nonnegative operator monotone decreasing function on $(0, \infty)$ and $0 < s A \leq  B  \leq t A$ for some constants $0 < s \leq t$. Then, for all $v \in [0 , 1]$
\begin{align*} 
 g( A \sharp_v B) \leq  \mu(s,t) (g(A)\sharp_v g(B)).
\end{align*}
\end{lemma}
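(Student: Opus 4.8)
The plan is to reduce the statement to the single family of resolvent functions $g_\lambda(x)=(x+\lambda)^{-1}$ and then to reassemble $g$ through the integral representation of operator monotone decreasing functions. First I recall that a nonnegative operator monotone decreasing function on $(0,\infty)$ admits a representation
$$
g(x)=\gamma+\int_{[0,\infty)}\frac{1}{x+\lambda}\,d\nu(\lambda),\qquad \gamma\ge 0,
$$
with $\nu$ a positive measure (this is classical; see \cite{Hiai2010}). Hence it suffices to prove the asserted inequality for each $g_\lambda$ and for the constant $\gamma$ with the one common factor $\mu(s,t)$, and then integrate. The point of passing to this representation is that a general $g$ is not homogeneous, so a scaling factor cannot be pushed through $g$ directly; the resolvents are the homogeneous pieces on which the Specht-ratio estimate can act.

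The heart of the argument is the resolvent case. Using the identity $X^{-1}\sharp_v Y^{-1}=(X\sharp_v Y)^{-1}$ one has $g_\lambda(A)\sharp_v g_\lambda(B)=\big[(A+\lambda I)\sharp_v(B+\lambda I)\big]^{-1}$, while $g_\lambda(A\sharp_v B)=(A\sharp_v B+\lambda I)^{-1}$. Since operator inversion reverses order (and a positive scalar factor passes through it), the desired inequality $g_\lambda(A\sharp_v B)\le \mu(s,t)\,\big(g_\lambda(A)\sharp_v g_\lambda(B)\big)$ is equivalent to
$$
(A+\lambda I)\sharp_v(B+\lambda I)\le \mu(s,t)\,(A\sharp_v B+\lambda I).
$$
I would establish this in three short moves. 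The arithmetic--geometric mean inequality $\sharp_v\le\nabla_v$ removes the shift, since $(A+\lambda I)\sharp_v(B+\lambda I)\le (A+\lambda I)\nabla_v(B+\lambda I)=A\nabla_v B+\lambda I$. The reverse Young inequality of Lemma \ref{lemma 3.1_ineq01}, valid exactly under the hypothesis $sA\le B\le tA$, then gives $A\nabla_v B\le \mu(s,t)(A\sharp_v B)$. Finally $\mu(s,t)\ge 1$ absorbs the leftover shift, $\mu(s,t)(A\sharp_v B)+\lambda I\le \mu(s,t)(A\sharp_v B+\lambda I)$. Note that the sandwich condition is invoked only on the original pair $A,B$, so the constant remains $\mu(s,t)$ for every $\lambda$.

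For the recombination I would integrate the resolvent inequality against $\nu$ and use the joint concavity (equivalently, superadditivity) of the weighted geometric mean, $(X_1+X_2)\sharp_v(Y_1+Y_2)\ge X_1\sharp_v Y_1+X_2\sharp_v Y_2$, together with $\gamma I\sharp_v\gamma I=\gamma I$, to obtain $g(A)\sharp_v g(B)\ge \gamma I+\int g_\lambda(A)\sharp_v g_\lambda(B)\,d\nu(\lambda)$. Combining this with $g(A\sharp_v B)=\gamma I+\int g_\lambda(A\sharp_v B)\,d\nu(\lambda)\le \gamma I+\mu(s,t)\int g_\lambda(A)\sharp_v g_\lambda(B)\,d\nu(\lambda)$, and using $\mu(s,t)\ge 1$ with $\gamma\ge 0$, closes the proof. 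The main obstacle is exactly this bookkeeping: keeping a single uniform constant $\mu(s,t)$ while moving from the resolvent pieces to $g$, which forces the superadditivity step to be run in the correct direction and the constant $\gamma$ to be carried along carefully.
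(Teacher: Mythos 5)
Your argument is correct, but there is an important caveat about the comparison: the paper does not prove Lemma \ref{l1} at all --- it is imported verbatim from \cite{KF2018} --- so the relevant benchmark is the published proof there (and in its precursor \cite{GK2016}), and your route is genuinely different from it. The published argument treats $g$ as a black box: it combines the reverse Young inequality of Lemma \ref{lemma 3.1_ineq01} with the Ando--Hiai characterization \cite{AH2011} that a nonnegative operator monotone decreasing function is operator log-convex, $g(A\nabla_v B)\le g(A)\sharp_v g(B)$, together with the scaling fact $g(\mu^{-1}x)\le \mu\, g(x)$ for $\mu\ge 1$ (equivalently, that $t\,g(t)$ is nondecreasing for such $g$). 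You instead inline all of this machinery: the representation $g(x)=\gamma+\int_{[0,\infty)}(x+\lambda)^{-1}\,d\nu(\lambda)$ reduces the claim to resolvents, where the inversion identity $X^{-1}\sharp_v Y^{-1}=(X\sharp_v Y)^{-1}$ converts the assertion into the shift inequality $(A+\lambda I)\sharp_v(B+\lambda I)\le\mu(s,t)(A\sharp_v B+\lambda I)$; this follows from translation invariance of $\nabla_v$, one application of Lemma \ref{lemma 3.1_ineq01} to the original pair $A,B$ (so the constant is uniform in $\lambda$, as you rightly stress), and $\mu(s,t)\ge 1$, which holds since $S(t)\ge 1$; superadditivity of $\sharp_v$ (joint concavity plus positive homogeneity) then reassembles $g$ with the correct inequality direction, and $\gamma\ge 0$ is absorbed because $\mu\ge 1$. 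Every step checks out. What your proof buys is self-containedness: it needs only the classical representation theorem (see \cite{Hiai2010}) and elementary properties of the geometric mean, rather than invoking the Ando--Hiai theorem, whose own proof rests on the same resolvent reduction. The costs are length and two standard facts you use tacitly --- that the integral representation passes through the functional calculus, and that superadditivity of $\sharp_v$ extends from finite sums to integrals --- both routine but worth recording if this were written out in full.
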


\begin{theorem}\label{t1}
Let $g$ be a nonnegative  operator monotone decreasing function on $(0, \infty)$ and $ 0 < s A \leq B \leq t A$ for some scalars $s , t > 0 $. Then for all $v \in [0, 1]$ and $k=1,2,\cdots, n$,
\begin{align*}
 \prod_{j=1}^k \lambda_j (g(A\sharp_v B)) \leq \mu^k\left(s,t\right)  \Big( \Big\{ \prod_{j=1}^k  \lambda_j (g(A)) \Big\}\sharp_v \Big\{  \prod_{j=1}^k  \lambda_j (g(B)) \Big\} \Big).
\end{align*}
\end{theorem}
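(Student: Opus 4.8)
The plan is to combine the pointwise operator inequality from Lemma~\ref{l1} with the multiplicative Minkowski-type eigenvalue inequality \eqref{e1} of Bourin and Hiai. First I would invoke Lemma~\ref{l1}: under the sandwich hypothesis $0 < sA \leq B \leq tA$, the nonnegative operator monotone decreasing function $g$ satisfies
\begin{align}\label{planstep1}
g(A \sharp_v B) \leq \mu(s,t)\,\big(g(A)\sharp_v g(B)\big).
\end{align}
Since both sides of \eqref{planstep1} are positive operators and the ordering $X \leq Y$ forces $\lambda_j(X) \leq \lambda_j(Y)$ for every index $j$ (Weyl monotonicity of eigenvalues), taking the product of the top $k$ eigenvalues yields
\[
\prod_{j=1}^k \lambda_j\big(g(A\sharp_v B)\big) \leq \prod_{j=1}^k \lambda_j\!\Big(\mu(s,t)\big(g(A)\sharp_v g(B)\big)\Big) = \mu^k(s,t)\,\prod_{j=1}^k \lambda_j\big(g(A)\sharp_v g(B)\big),
\]
where the scalar $\mu(s,t)$ pulls out as $\mu^k(s,t)$ because scaling an operator by a positive constant scales each eigenvalue by that same constant.

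Next I would apply the Bourin--Hiai inequality \eqref{e1} to the pair of positive operators $g(A)$ and $g(B)$ in place of $A$ and $B$. This is legitimate since $g$ maps $(0,\infty)$ into the nonnegatives, so $g(A)$ and $g(B)$ are positive operators to which \eqref{e1} directly applies. That gives
\[
\prod_{j=1}^k \lambda_j\big(g(A)\sharp_v g(B)\big) \leq \Big\{\prod_{j=1}^k \lambda_j(g(A))\Big\}\sharp_v \Big\{\prod_{j=1}^k \lambda_j(g(B))\Big\}.
\]
Chaining this with the previous display produces exactly the claimed bound, with the constant $\mu^k(s,t)$ out front.

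The only genuinely delicate point is the first reduction, namely that the operator inequality \eqref{planstep1} transfers to a product-of-eigenvalues inequality. The step $X \leq Y \Rightarrow \lambda_j(X) \leq \lambda_j(Y)$ is standard Weyl monotonicity, and since all the $\lambda_j$ involved are nonnegative the products preserve the inequality; I expect no real obstacle here, only the need to state this monotonicity explicitly. I would double-check that the hypotheses of Lemma~\ref{l1} and of \eqref{e1} are both met: Lemma~\ref{l1} needs the sandwich condition and operator monotone decreasing $g$ (both assumed), while \eqref{e1} needs only positivity of its two operator arguments, which holds for $g(A), g(B)$ by nonnegativity of $g$. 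Thus the whole argument is a clean two-step chaining, and writing it out amounts to recording these two applications together with the eigenvalue monotonicity that glues them.
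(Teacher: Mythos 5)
Your proposal is correct and follows exactly the paper's own argument: apply Lemma~\ref{l1} to bound $g(A\sharp_v B)$ by $\mu(s,t)\,(g(A)\sharp_v g(B))$, pass to eigenvalue products via Weyl monotonicity (which the paper leaves implicit but you rightly make explicit), pull out the scalar as $\mu^k(s,t)$, and finish with the Bourin--Hiai inequality \eqref{e1} applied to $g(A)$ and $g(B)$. No gaps; this is the same two-step chaining as in the paper.
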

\begin{proof}
We compute
\begin{align}
 \prod_{j=1}^k \lambda_j (g(A\sharp_v B))
 & \leq  \prod_{j=1}^k \lambda_j  \big( \mu(s,t) \left(g(A) \sharp_v g(B)\right)\big) \hspace*{1cm} \text{(by Lemma \ref{l1})}\nonumber\\
  & = \mu^k(s,t)  \prod_{j=1}^k \lambda_j  ( g(A) \sharp_v g(B)) \nonumber\\
 & \leq  \mu^k(s,t)  \Big(\Big\{ \prod_{j=1}^k \lambda_j (g(A)) \Big\} \sharp_v \Big\{ \prod_{j=1}^k \lambda_j (g(B)) \Big\}\Big) \hspace*{1cm} \text{(by \eqref{e1})}.\nonumber
\end{align}
\end{proof}
\begin{corollary}
Let $g$ be a nonnegative  operator monotone decreasing function on $(0, \infty)$ and $ 0 < s A \leq B \leq t A$ for some scalars $s, t>0$. Then for all $\nu \in [0, 1]$ 
\begin{align*}
\det g(A\sharp_v B)  \leq \mu^n(s,t)  \left( \det g(A) \sharp_v  \det g(B) \right).
\end{align*}
\end{corollary}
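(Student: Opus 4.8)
The plan is to obtain the determinant inequality as the special case $k=n$ of Theorem \ref{t1}, which already carries all the analytic content. First I would recall the elementary fact that for any positive definite operator $M$ acting on the $n$-dimensional Hilbert space $\mathcal{H}$, the product of all its eigenvalues equals its determinant, that is $\prod_{j=1}^n \lambda_j(M) = \det M$. Since $g$ is assumed nonnegative, each of the operators $g(A\sharp_v B)$, $g(A)$ and $g(B)$ is positive, so this identity is applicable to all three of them.

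Next I would simply substitute $k=n$ into the conclusion of Theorem \ref{t1}. The hypotheses needed there, namely that $g$ is a nonnegative operator monotone decreasing function on $(0,\infty)$ together with the sandwich condition $0 < sA \leq B \leq tA$, are exactly the hypotheses granted in the corollary, so the theorem applies without modification. Under $k=n$ the left-hand side $\prod_{j=1}^n \lambda_j\big(g(A\sharp_v B)\big)$ collapses to $\det g(A\sharp_v B)$, the two braced products on the right become the positive scalars $\det g(A)$ and $\det g(B)$, and the constant $\mu^k(s,t)$ becomes $\mu^n(s,t)$.

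Finally I would note that the geometric mean $\sharp_v$ of two positive scalars $a,b$ reduces to the weighted geometric mean $a\sharp_v b = a^{1-v}b^{v}$, so the braced quantity $\{\det g(A)\}\sharp_v \{\det g(B)\}$ is precisely $\det g(A)\sharp_v \det g(B)$ as written in the statement. Assembling these three observations yields the asserted bound directly.

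There is no genuine obstacle in this argument: it is a straightforward specialization, and the only points that merit explicit mention are the (standard) identification of the full product of eigenvalues with the determinant and the reduction of the operator geometric mean to the weighted geometric mean on scalars. Consequently the proof amounts to little more than setting $k=n$ and reading off the resulting expressions.
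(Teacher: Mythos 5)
Your proposal is correct and matches the paper's (implicit) argument: the corollary is stated there without proof precisely because it is the $k=n$ case of Theorem \ref{t1}, using $\prod_{j=1}^n \lambda_j(M)=\det M$ and the fact that for positive scalars $a\sharp_v b = a^{1-v}b^v$. Nothing further is needed.
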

\begin{remark}
According to Theorem \ref{t1}, by letting $F(A)= \prod\limits_{j=1}^k \lambda_j (g(A))$ where $g$ is operator monotone decreasing function on $(0, \infty)$, we have
\begin{align*}
F(A\sharp_v B)  \leq  \mu^k(s,t)   \big(F(A) \nabla_v F(B)\big).
\end{align*}
This inequality gives a variant of geodesically convexity property of \eqref{definition 4.1} for the function $F(A)=\prod\limits_{j=1}^k \lambda_j (g(A))$. Further, it provides an extension of  Minkowski type inequality \eqref{e1} to the operator functions.
\end{remark}
In the next, we will see an extension of  Minkowski type inequality \eqref{e6}.
\begin{theorem}
Let $f$ be an operator monotone function on $(0,\infty)$ and $ 0 < s A \leq B \leq t A$ for some scalars $s , t > 0 $. Then for all $v \in [0, 1]$ and $k=1,2,\cdots, n$,
\begin{align*}
 \prod_{j=n+1-k}^n \lambda_j (f(A\sharp_v B)) \geq \mu^k(s,t)   \Big( \Big\{ \prod_{j=n+1-k}^n \lambda_j (f(A)) \Big\} \sharp_v \Big\{ \prod_{j=n+1-k}^n  \lambda_j (f(B)) \Big\} \Big)
\end{align*}
\end{theorem}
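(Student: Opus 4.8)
The plan is to mirror the structure of the preceding Theorem \ref{t1}, but adapted to the lower spectral products and to the reverse inequality \eqref{e6}. The key mechanism is a lower-bound companion of Lemma \ref{l1}: whereas Lemma \ref{l1} gives an upper Specht-ratio estimate for operator monotone \emph{decreasing} functions, here $f$ is operator \emph{monotone} (increasing), so I expect the governing pointwise estimate to run in the opposite direction, namely $f(A\sharp_v B)\geq \mu^{-1}(s,t)\,\bigl(f(A)\sharp_v f(B)\bigr)$ under the sandwich condition $0<sA\leq B\leq tA$. This is the natural analogue, since an operator monotone increasing function should reverse the role played by a decreasing one; I would first establish (or cite, if it sits in \cite{KF2018} or \cite{GK2016}) this reverse Specht-type inequality for operator monotone functions.

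Granting that pointwise operator inequality, I would carry out the antitone analogue of the computation in Theorem \ref{t1}, working with the products over the smallest eigenvalues $\prod_{j=n+1-k}^n \lambda_j(\cdot)$. First I would pass from $f(A\sharp_v B)$ to $\mu^{-1}(s,t)\bigl(f(A)\sharp_v f(B)\bigr)$ using the reverse estimate; since the $\lambda_j$ here are taken over the bottom of the spectrum and the inequality points the other way, this step preserves the $\geq$ direction on the product. Then I would invoke the second Bourin--Hiai inequality \eqref{e6}, which is precisely the Minkowski-type lower bound for products of the smallest eigenvalues of a geometric mean, to split $\prod_{j=n+1-k}^n \lambda_j(f(A)\sharp_v f(B))$ into the $\sharp_v$ of the two separate products. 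Pulling the scalar $\mu^{-1}(s,t)$ through $k$ factors produces $\mu^{-k}(s,t)$, and homogeneity of $\sharp_v$ in the scalar lets me factor it out, matching the claimed $\mu^k(s,t)$ after rearranging to the stated form.

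Concretely, the chain I would display is
\begin{align*}
 \prod_{j=n+1-k}^n \lambda_j (f(A\sharp_v B))
 &\geq \prod_{j=n+1-k}^n \lambda_j\Bigl(\mu^{-1}(s,t)\bigl(f(A)\sharp_v f(B)\bigr)\Bigr)\\
 &= \mu^{-k}(s,t)\prod_{j=n+1-k}^n \lambda_j\bigl(f(A)\sharp_v f(B)\bigr)\\
 &\geq \mu^{-k}(s,t)\Bigl(\Bigl\{\prod_{j=n+1-k}^n \lambda_j(f(A))\Bigr\}\sharp_v\Bigl\{\prod_{j=n+1-k}^n \lambda_j(f(B))\Bigr\}\Bigr),
\end{align*}
after which I would reconcile the $\mu^{-k}$ against the $\mu^k$ in the statement; I suspect the intended reading places $\mu^k$ on the opposite side, or that the precise Specht constant for the increasing case absorbs the sign, so I would double-check the direction of the auxiliary lemma against \eqref{e6} before finalizing.

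The main obstacle I anticipate is exactly this first ingredient: establishing the correct reverse Specht-ratio bound $f(A\sharp_v B)\geq \mu^{-1}(s,t)(f(A)\sharp_v f(B))$ for operator monotone (increasing) $f$, and pinning down whether the constant lands as $\mu$ or $\mu^{-1}$ so that the eigenvalue products ultimately come out with the stated $\mu^k(s,t)$ and the inequality points as $\geq$. The eigenvalue-product and $\sharp_v$-homogeneity steps are routine once that lemma is in hand, so the entire proof hinges on getting the monotone-function Specht estimate and its direction right.
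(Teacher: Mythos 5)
Your proposal is correct in substance and takes a genuinely different route from the paper's. The paper proves this theorem by reduction to Theorem \ref{t1}: since $f$ is operator monotone (and implicitly strictly positive), $1/f$ is a nonnegative operator monotone decreasing function, so Theorem \ref{t1} applies with $g=1/f$; the identity $\lambda_j(X^{-1})=\lambda_{n+1-j}(X)^{-1}$ then converts top-$k$ eigenvalue products of inverses into reciprocals of bottom-$k$ products, and taking reciprocals together with $x^{-1}\sharp_v y^{-1}=(x\sharp_v y)^{-1}$ finishes. You instead use (i) the pointwise reverse estimate $f(A\sharp_v B)\geq \mu^{-1}(s,t)\bigl(f(A)\sharp_v f(B)\bigr)$, (ii) Weyl monotonicity to pass to products of the smallest eigenvalues, and (iii) the Bourin--Hiai inequality \eqref{e6}, which the paper quotes but never actually uses in its own proof. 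Your ingredient (i) is correct and available: it is essentially contained in \cite{GK2016,KF2018}, and it also follows from Lemma \ref{lemma 3.1_ineq01} together with operator concavity of nonnegative operator monotone functions, via
\begin{align*}
f(A)\sharp_v f(B) \;\leq\; f(A)\nabla_v f(B) \;\leq\; f(A\nabla_v B) \;\leq\; f\bigl(\mu(s,t)(A\sharp_v B)\bigr) \;\leq\; \mu(s,t)\, f(A\sharp_v B),
\end{align*}
the last step using $f(\mu x)\leq \mu f(x)$ for concave $f\geq 0$ and $\mu\geq 1$. The paper's route is shorter because it recycles Theorem \ref{t1}; yours is more transparent, keeping the Specht-type reverse and the Minkowski-type eigenvalue splitting as two separate, visible mechanisms.

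On the constant you flagged: your $\mu^{-k}(s,t)$ is the correct one, and you should not try to do better. The paper's own proof also yields exactly $\mu^{-k}(s,t)$ --- inverting the last displayed inequality of its proof gives $\prod_{j=n+1-k}^n \lambda_j(f(A\sharp_v B)) \geq \mu^{-k}(s,t)\bigl(\{\prod_{j=n+1-k}^n\lambda_j(f(A))\}\sharp_v\{\prod_{j=n+1-k}^n\lambda_j(f(B))\}\bigr)$ --- so the factor $\mu^{k}(s,t)$ in the printed statement (and in the determinant corollary following it) is a typo for $\mu^{-k}(s,t)$; equivalently, the intended reading places $\mu^k(s,t)$ on the left-hand side, exactly as you suspected. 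As printed the statement is in fact false: take $n=k=1$, $f(x)=x$, $A=1$ and $B=s=t>1$; it would assert $t^{v}\geq S(t)\,t^{v}$, contradicting $S(t)>1$. So finalize your argument with $\mu^{-k}(s,t)$; there is no sharper Specht constant for the increasing case that ``absorbs the sign.''
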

\begin{proof}
Since $f$  is operator monotone  on $(0,\infty)$, so $1/ f$ is  operator monotone decreasing on $(0,\infty)$ and we can apply Theorem \ref{t1} for $g=1/ f$ as follows.
\begin{align*}
 \prod_{j=1}^k \lambda_j \big((f(A\sharp_v B))^{-1}\big) \leq  \mu^k(s,t)  \Big\{ \prod_{j=1}^k \lambda_j \left((f(A))^{-1}\right) \Big\} \sharp_v \Big\{ \prod_{j=1}^k \lambda_j \left((f(B))^{-1}\right) \Big\},
\end{align*}
and hence
\begin{align*}
 \prod_{j=n+1-k}^n \left(\lambda_j \big(f(A\sharp_v B)\big)\right)^{-1} \leq   \mu^k(s,t)  \Big\{ \prod_{j=n+1-k}^n \left(\lambda_j \left(f(A)\right)\right)^{-1}\Big\} \sharp_v \Big\{ \prod_{j=n+1-k}^n \left(\lambda_j \left(f(B)\right)\right)^{-1} \Big\}.
\end{align*}
By using the property $X^{-1} \sharp_v Y^{-1}= (X \sharp_v Y)^{-1}$ and reversing the inequality, we get the desired result.
\end{proof}
\begin{corollary}
Let $f$ be an operator monotone function on $(0,\infty)$ and $ 0 < s A \leq B \leq t A$ for some scalars $s , t > 0 $. Then for all $v \in [0, 1]$
\begin{align*}
\det f(A\sharp_v B)  \geq \mu^n(s,t)  \left( \det f(A) \sharp_v  \det f(B) \right).
\end{align*}
\end{corollary}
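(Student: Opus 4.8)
The plan is to obtain this corollary as an immediate specialization of the immediately preceding theorem to the extremal case $k=n$. First I would recall the basic fact that for a positive definite operator $X$ acting on the $n$-dimensional Hilbert space $\mathcal{H}$, the determinant equals the product of all $n$ of its eigenvalues, i.e. $\prod_{j=1}^n \lambda_j(X) = \det X$. This is the only structural observation needed.

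Next I would set $k=n$ in the theorem. With this choice the index range $j=n+1-k,\dots,n$ collapses to the full range $j=1,\dots,n$, so every product $\prod_{j=n+1-k}^n \lambda_j(\cdot)$ appearing in the theorem becomes a product of all $n$ eigenvalues, hence a determinant. Concretely, the left-hand side $\prod_{j=n+1-k}^n \lambda_j(f(A\sharp_v B))$ becomes $\det f(A\sharp_v B)$, while on the right the two bracketed products become $\det f(A)$ and $\det f(B)$. The constant $\mu^k(s,t)$ simultaneously becomes $\mu^n(s,t)$.

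Finally I would note that $\det f(A)$ and $\det f(B)$ are positive scalars, so the operator geometric mean $\sharp_v$ between them reduces to the ordinary weighted scalar geometric mean, and the resulting inequality
$$
\det f(A\sharp_v B) \geq \mu^n(s,t)\,\bigl(\det f(A)\,\sharp_v\,\det f(B)\bigr)
$$
is exactly the asserted statement. Since the hypotheses on $f$ (operator monotone on $(0,\infty)$) and on $A,B$ (the sandwich condition $0<sA\leq B\leq tA$) are identical to those of the theorem, no additional verification of the assumptions is required.

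I do not expect any genuine obstacle here: the corollary is a direct reading of the theorem at $k=n$, and the only point meriting explicit mention is the identification of the product of the full eigenvalue list with the determinant. The proof is therefore essentially a one-line substitution together with this elementary remark.
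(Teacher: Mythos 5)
Your proposal is correct and matches the paper's intent exactly: the corollary is stated there without proof precisely because it is the $k=n$ case of the preceding theorem, where the product over $j=1,\dots,n$ of the eigenvalues of each positive definite matrix is its determinant and $\mu^k(s,t)$ becomes $\mu^n(s,t)$. Your explicit remarks (index range collapsing, determinant identification, $\sharp_v$ reducing to the scalar weighted geometric mean) are exactly the intended one-line substitution.
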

\begin{remark}
The appeared constant $\mu(s,t)=\max \lbrace  S(s), S(t)\rbrace$ in the all results of the preceding sections can be replaced by $\max \lbrace  K(s)^R, K(t)^R \rbrace$ where $K(h) = \dfrac{(h+1)^2}{4h}, h>0$ is the Kantorovich constant and  $R= \max \lbrace v, 1-v \rbrace$, with no ordering between them. See \cite{KF2018}.
\end{remark}

\section*{Acknowledgements}
The authors (M.R.J.) and (V.K.) were supported by Iran National Science Foundation (INSF), Project Number 96009632, and the author (S.F.) was partially supported by JSPS KAKENHI Grant Number 16K05257.

  \bibliographystyle{elsarticle-num} 

\section*{References}

\end{document}